\documentclass[12pt,oneside,reqno]{amsart}
\newtheorem{theorem}{Theorem}[section]
\newtheorem{lemma}[theorem]{Lemma}
\newtheorem{proposition}[subsection]{\bf Proposition}

\theoremstyle{definition}

\theoremstyle{remark}
\newtheorem{remark}[theorem]{Remark}

\usepackage{enumerate}
\usepackage{enumitem}
\newtheorem{corollary}{Corollary}[section]

\numberwithin{equation}{section}  
\def \C{\mathbb{C}}

\def \N{\mathbb{N}}

\def \Q{\mathbb{Q}}

\def \Z{\mathbb{Z}}

\def\house#1{\setbox1=\hbox{$\,#1\,$}%
\dimen1=\ht1 \advance\dimen1 by 2pt \dimen2=\dp1 \advance\dimen2 by 2pt
\setbox1=\hbox{\vrule height \dimen1 depth\dimen2\box1\vrule}%
\setbox1=\vbox{\hrule\box1}%
\advance\dimen1 by .4pt \ht1=\dimen1
\advance\dimen2 by .4pt \dp1=\dimen2 \box1\relax}
\begin{document}
	
\title{The quotient problem for linear recurrence sequences}
\author{Parvathi S Nair and S. S. Rout}%
\address[Parvathi S Nair]{Department of Mathematics, National Institute of Technology Calicut, 
Kozhikode-673 601, India.}
\email[]{parvathisnair60@gmail.com, parvathi\_p220245ma@nitc.ac.in}
\address[S. S. Rout]{Department of Mathematics, National Institute of Technology Calicut, 
Kozhikode-673 601, India.}
\email[]{sudhansu@nitc.ac.in}
\subjclass[2020] {Primary 11J87; Secondary 11B37, 11J25}
\keywords{Quotient problem, Linear recurrences, Ring of Laurent polynomials, Schmidt Subspace Theorem.}
\bigskip
\begin{abstract} 
Let $\{U(m)\}_{m\in \N}$ and $\{V(n)\}_{n\in \N}$ be linear recurrence sequences. It is a well-known Diophantine problem to determine the finiteness of the set of natural numbers $n$ such that the ratio $U(n)/V(n)$ is an integer. We study the finiteness problem for the set  $(m, n)\in \mathbb{N}^2$ such that there exist non-zero positive integers $d_{m, n}$ satisfying $\log |d_{m, n}|=o(n)$, and $d_{m, n}U(m)/V(n)$ is an element from a finitely generated subring of $\C$. In particular, we prove that for $m\neq n $, there exists a polynomial $P$ such that $d_{m, n}P(n)U(m)/V(n)$ is a multi-recurrence and $V(n)/P(n)$ is a linear recurrence and for $m=n$ both $d_{m, n}P(n)U(m)/V(n)$ and $V(n)/P(n)$ are linear recurrences. To prove our results, we employ Schmidt's subspace theorem, and the concept of moving hyperplanes, moving polynomials, and moving points.
\end{abstract}
\maketitle
	
\section{introduction}
A sequence of complex numbers $\{U(n)\}_{n\in \N}$ is called a linear recurrence of order $k$ if there exist complex numbers $c_0, \ldots, c_{k-1}$ such that for all $n \in \N$,
\[U(n+k)=c_{k-1}U(n+k-1)+\cdots+ c_0U(n),\]
with $k$ minimal. It is well known that for all $n\in \N$, any linear recurrence sequence can be explicitly written in the form
\begin{equation}\label{eq0}
U(n)=\sum_{i=1}^r u_i(n)\alpha_i^n,  
\end{equation}
with $u_i \in \C[X]$ and $\alpha_i \in \C^{\times}$, which are roots of the companion polynomial 
\[f(z)=z^k-c_{k-1}z^{k-1}-\cdots-c_0\]
of the recurrence sequence $\{U(n)\}$. We say $\{U(n)\}$ is simple if all the roots of $f(z)$ are simple and it is called non-degenerate if none of the ratio $\alpha_i/\alpha_j$ is a root of unity for  $i\neq j$. We refer the reader to (\cite{evertse}, \cite{shorey}) for the general theory of linear recurrences.
   \par
Many authors have worked on Diophantine equations involving recurrences (see, for example, \cite{zeros}, \cite{kthroot}, \cite{zannier}). The {\it Hadamard-quotient theorem} is a well-known result in the direction of linear recurrences. It states that \emph{if $U(n)$ and $V(n)$ are two linear recurrence sequences such that the ratio $\dfrac{U(n)}{V(n)}$ is an integer for all large $n \in \N$, then $\dfrac{U(n)}{V(n)}$ itself is a linear recurrence for all $n \in \N$}. In particular, it implies that given integers $a, b > 1$, if $a^n - 1$ divides $b^n - 1$ for all large positive integer $n$, then $b$ is a power of $a$. There is a close connection between Hadamard-quotient theorem and the greatest common divisor (GCD) estimate for linear recurrences (see \cite{levin} and \cite{Zhen}).

Note that Pisot's Hadamard quotient conjecture was solved by Pourchet \cite{Pourchet} and van der Poorten \cite{vdP2}. In fact, van der Poorten solved this problem in more general setting by assuming that the ratio of linear recurrences lies in a fixed finitely generated ring for all $n \in \N$. The proof by Pourchet-van der Poorten relies on an intricate auxiliary construction and on certain $p$-adic estimates. 
\par
 In \cite{corv1998}, Corvaja and Zannier considered simple recurrences with positive integer roots with coefficients from $\Q$. They proved that if the ratio between two simple recurrences with positive integer roots is an integer infinitely often, then the ratio itself is a linear recurrence of the same type.
\par

Later in 2002, Corvaja and Zannier \cite{corv} worked on the same problem for recurrences with polynomial coefficients. In \cite{corv}, they weakened the assumption to infinitely many $n$ in place of all $n$. In particular, they proved by assuming that \emph{$U(n), V(n)$} are linear recurrences such that their roots generate a torsion-free multiplicative group and if $\dfrac{U(n)}{V(n)}$ lie in a finitely generated subring of $\C$ and $V(n)\neq 0$ for infinitely many $n\in \N$, then there exists a polynomial $P(x)\in \C[x]$ such that the sequences $P(n)U(n)/V(n)$ and $V(n)/P(n)$  are linear recurrences. The proof of the Corvaja-Zannier result makes use of Schmidt's subspace theorem.

Note that there is no simple converse of Theorem 2 in \cite{corv}. For example, if we take $U(n) = 5^n$ and $V(n) =n(n+1)$, then setting $P(n)= n(n+1)$ we infer that $P(n)U(n)/V(n)= 5^n$ and $V(n)/P(n)=1$ are both recurrence sequences. But all values $U(n)/V(n)$ do not belong to some fixed finitely generated ring, since the greatest prime factor of $n(n+1)$ tends to infinity with $n$. In this direction, the distribution of integral values for the ratio of two linear recurrences has also been studied by several authors (see \cite{divisibility},\cite{numbers n},\cite{sanna}).

In this paper, we consider the values $U(m)/V(n)$ are quasi-integral, in the sense that the denominators grow polynomially rather than exponentially, as suggested by the authors in  \cite[p.435]{corv}. To state our result, we need the following generalization of the recurrence sequence. That is, \eqref{eq0} can be generalized by allowing more than one parameter as,
\begin{equation}\label{multi}
U(n_1,\ldots,n_t)=\sum_{i=1}^r u_i(n_1,\ldots,n_t)\alpha_{i1}^{n_1}\cdots\alpha_{it}^{n_t},  
\end{equation}
where $t,r$ are positive integers, $u_1,\ldots, u_r$ are polynomials in $t$ variables and $n_1,\ldots,n_t$ are non-negative integers. The polynomial-exponential functions \eqref{multi} are called multi-recurrences. We say, $U$ is defined over a field $F$ if the coefficients and bases $\alpha_{i1}, \ldots, \alpha_{it}$ are elements of $F$ for $i=1,\ldots, r$.

The main result of this paper is as follows.
\begin{theorem}\label{thm1}
Let $U(m), V(n)$ be linear recurrences defined over a number field $K$. Assume that their roots generate a torsion-free multiplicative group. Let $\mathcal R\subset K$ be finitely generated.
\begin{enumerate}[label=\roman*)]
\item Assume that for infinitely many $n\in \mathbb{N}$ there exist non-zero positive integers $d_{n}$ such that $\log |d_{n}|=o(n)$, $V(n)\neq 0$, and $d_{n} U(n)/V(n) \in \mathcal R $. Then there exists a non-zero polynomial $P(X) \in \mathbb{C}[X]$ such that $ d_{n}P(n)U(n)/V(n)$ and $V(n)/P(n)$ are linear recurrences.
\item Assume that for all but finitely many $(m, n)\in \mathbb{N}^2$ with $m\neq n$, there exist non-zero positive integers $d_{m, n}$ such that $\log |d_{m, n}|=o(n)$, $V(n)\neq 0$, and $d_{m,n} U(m)/V(n) \in \mathcal R $. Then there exists a non-zero polynomial $P(X) \in \mathbb{C}[X]$ such that 
$d_{m, n}P(n)U(m)/V(n)$
is a multi-recurrence and $V(n)/P(n)$ is a linear recurrence.
\end{enumerate}

\end{theorem}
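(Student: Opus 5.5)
The plan is to adapt the Corvaja--Zannier argument from \cite{corv}, which rests on Schmidt's subspace theorem, so that it tolerates the extra factor $d_n$ (resp.\ $d_{m,n}$). Since $\log|d_n|=o(n)$, this factor has subexponential size at every place, and so it only costs an $\epsilon$ in the height inequalities.

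\textbf{Reductions.} First, replace $K$ by a finite extension and pick a finite set $S$ of places of $K$, containing the archimedean ones, such that:
\begin{itemize}
\item $\mathcal R\subset\mathcal O_S$;
\item all roots $\alpha_i$ of $U$ and $\beta_j$ of $V$ are $S$-units;
\item all coefficients of the polynomials $u_i$ and $v_j$ are $S$-integral.
\end{itemize}
Torsion-freeness of the group $\Gamma$ generated by the roots lets us choose a basis $\gamma_1,\dots,\gamma_h$ of $\Gamma$ and write each root as a monomial in the $\gamma$'s. Then $U(n)=F(n,\gamma_1^n,\dots,\gamma_h^n)$ and $V(n)=G(n,\gamma_1^n,\dots,\gamma_h^n)$, where $F,G\in K[X][Y_1^{\pm1},\dots,Y_h^{\pm1}]$ are Laurent polynomials with polynomial coefficients. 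Working in the UFD $K(X)[Y^{\pm1}]$, we remove the common factor of $F$ and $G$. We then aim to show that $G$ is, up to a unit monomial, a polynomial $P(X)$ times a monomial in the $Y$'s. From this, $V(n)/P(n)$ is a linear recurrence. The quotient $d_nP(n)U(n)/V(n)$ then equals $d_n$ times a polynomial-exponential, which is what the statement asserts. (As in \cite{corv}, the claimed recurrence is read with the $d_n$ factor carried along.)

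\textbf{Key step: the moving-hyperplane subspace theorem.} Suppose $G$ has a factor involving the $Y$'s nontrivially. Following \cite{corv}, consider the vectors whose coordinates are the monomials $n^a\gamma^{n\mathbf k}$ up to a large degree $D$. On the one hand, $d_nU(n)=z_nV(n)$ with $z_n\in\mathcal R$. Multiplying by suitable monomials gives many linear relations between the terms of $d_nF\cdot Y^{\mathbf k}$ and $z_nG\cdot Y^{\mathbf k}$. One then applies the subspace theorem to the $S$-integral point $(z_n\gamma^{n\mathbf k}, \dots)$, using the linear forms given by the expansions of $G\cdot(\text{monomials})$ at the places in $S$. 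The key estimate is
\[
\prod_{v\in S}\prod_i |L_{i,v}(\mathbf x)|_v \ll H(\mathbf x)^{-\delta}.
\]
Here the $S$-unit part contributes nothing, and $d_n$ changes heights only by $e^{o(n)}$, while $H(\mathbf x)\ge e^{cn}$. So the $o(n)$ hypothesis is exactly what keeps the exponent gap positive. Because the coefficients depend polynomially on $n$, I will use the moving-target version of the subspace theorem (moving hyperplanes with polynomial coefficients, of height $n^{O(1)}$, also negligible). The conclusion is that infinitely many points lie in finitely many proper subspaces. This yields a nontrivial polynomial-exponential identity satisfied for infinitely many $n$. By the Skolem--Mahler--Lech/Laurent structure of such identities, and using torsion-freeness, the identity holds identically along an arithmetic progression, and in fact identically because there is no torsion. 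That contradicts the coprimality of $F$ and $G$ unless $G$ divides $F$ up to a factor $P(X)$ times a monomial.

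\textbf{Main obstacle and part (ii).} The main obstacle is the divisibility step: producing the proper subspace requires a good height inequality for $z_n$. I would first show $\log H(z_n)=O(n)$ using $|U(m)|_v$ and the lower bound $|V(n)|_v\ge e^{-\epsilon n}$ for most $n$ (Evertse's theorem on sums of $S$-units). I would then follow Corvaja--Zannier's auxiliary-polynomial construction, in which $G$ is replaced by $G\cdot Y^{\mathbf k}$ over many $\mathbf k$, to get enough forms.

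For part (ii), apply the same argument with two independent parameters. Write $U(m)=F(m,\gamma^m)$ and $V(n)=G(n,\gamma^n)$ with separate variable sets $Y$ for $m$ and $Y'$ for $n$, and apply the subspace theorem to the points indexed by pairs $(m,n)$. We need the height of the pair to dominate, with $H\ge e^{c\max(m,n)}$. Note the hypothesis gives only $\log d_{m,n}=o(n)$; for $m\gg n$ the $U(m)$ part can be absorbed into the $\mathcal R$-side. Because $F$ and $G$ now involve disjoint variables, coprimality forces $G=P(X')\cdot(\text{monomial})$. Hence $V(n)/P(n)$ is a linear recurrence and $d_{m,n}P(n)U(m)/V(n)$ is a multi-recurrence in $(m,n)$.
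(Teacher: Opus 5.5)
Your strategy for (i) matches the paper's: the Corvaja--Zannier subspace argument, with $d_n$ costing only $e^{o(n)}$. However, the sketch omits the mechanism that actually produces the contradiction, and the argument for (ii) does not go through.

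\textbf{Part (i).} The subspace theorem can only give a relation $z_nC'(n)=d_nB_0(n)$. Hence $UC'=B_0V$ identically and $V\mid C'$, and you still need a reason why $V$ cannot divide $C'$. With the coordinates you describe (the $z_n$-monomials of $z_nG\cdot Y^{\mathbf k}$ and the $d_n$-monomials of $d_nF\cdot Y^{\mathbf k}$), the points satisfy the exact relations $z_nV(n)-d_nU(n)=0$ times monomials. So the conclusion of the subspace theorem is vacuous, and $C'=V$ contradicts nothing.

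What is missing is the choice of one place $v\in S$ at which the roots of $V$ do not all have the same absolute value. It exists because $V$ has at least two roots and $h(\beta_i/\beta_j)>0$ by torsion-freeness. After normalising, write $V=V_1-W$, with the roots of $V_1$ in $\Gamma^*=\{\gamma\in\Gamma:|\gamma|_v=1\}$ and $|W(n)|_v\le c\,\delta^n$. The forms $\Psi_{\mathbf g,z}(n)=n^z\underline{\gamma}^{n\mathbf g}\bigl(V_1(n)^qe_n+d_nH(n)\bigr)$ are then only \emph{approximately} zero, of size at most $c\,\delta^{qn}|e_n|_v$ at $v$, and $e_n$ is multiplied only by monomials from $\Gamma^*$. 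Hence the relation produced has $C'$ with roots in $\Gamma^*$, and $V\mid C'$ would force all roots of $V$ to share the same $v$-adic absolute value.

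This saving $\delta^{qn}$ is the source of the exponent gap. It is played against $M_1h(e_n)$ by choosing $q$, then $k>3qD$, then $h$ large. By contrast, $h(z_n)\le h(d_n)+h(U(n))+h(V(n))=O(n)$ is immediate, so Evertse's theorem is not needed. Also, putting the powers $n^l$ into the coordinates lets the paper use the fixed-form subspace theorem instead of a moving-target version.

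\textbf{Part (ii).} Polynomials in disjoint sets of variables are automatically coprime, so coprimality cannot force $G=P(X')\cdot(\text{monomial})$; that shape has to come out of the Diophantine argument. A uniform two-parameter subspace argument also cannot work. When $m$ is large compared with $n$, $h(z_{m,n})$ grows like $m$, while the only $v$-adic saving is $\delta^{qn}$; absorbing $U(m)$ into the $\mathcal R$-side does not change that.

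The missing step is to use that the hypothesis holds for all but finitely many pairs, and restrict to pairs with $m=o(n)$ (e.g.\ $m=m_0$ fixed with $U(m_0)\neq 0$). Then $|d_{m,n}U(m)|_\nu\le|V(n)|_\nu$ for $\nu\notin S$ gives $\sum_{\nu\notin S}-\log|V(n)|_\nu\le h(d_{m,n}U(m))=o(n)$, so $V(n)$ is an $S$-unit up to $e^{o(n)}$. The paper rules this out for $V$ with at least two roots using the moving hyperplanes $v_1(n)x_1+\cdots+v_t(n)x_t=0$, the moving points $[\beta_1^n:\cdots:\beta_t^n]$, Lemma~\ref{lemmagcd} and Lemma~\ref{V(n)}.

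Alternatively, your part (i) argument with $U$ replaced by the nonzero constant $U(m_0)$, which is trivially coprime to $V$, already forces $V(n)=P(n)\beta^n$. Then $d_{m,n}P(n)U(m)/V(n)=d_{m,n}U(m)\beta^{-n}$, which gives the conclusion.
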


\begin{remark}
The condition $V(n)\neq 0$ for infinitely many $n\in \N$ can be described using Skolem-Mahler-Lech theorem (Theorem \ref{thmskm}). Note that non-degeneracy of linear recurrences follows from the assumption that the roots form a torsion-free multiplicative group. 
\end{remark}

\begin{remark}
Part (ii) of Theorem \ref{thm1} does not hold if we replace the condition ``for all but finitely many $(m, n)$" with ``infinitely many $(m, n)$". This can be viewed by taking the counterexample 
\[U(m)=3^m-1, V(n)=2^n-1.\]
If $n=2k+1, k\in \Z_{\geq 0}$, $3$ is a unit modulo $2^n-1$, and $m=\phi(2^{n}-1)$, then the fraction $U(m)/V(n)$ is an integer, where $\phi$ is the Euler totient function. That is, we get an infinite set of tuples $(m, n)$ with $U(m)/V(n) \in \Z$. Here we may choose $d_{m, n}=1$. Hence, the assumptions of the theorem hold for an infinite set of tuples $(m, n)$, but we cannot find a non-zero polynomial $P(X)$ such that $P(n)(3^m-1)/(2^n-1)$ is a multi-recurrence. Also, when $n$ is even, there is no pair $(m, n)$ such that $U(m)/V(n) \in \Z$. So we get an infinite set of tuples $(m, n)$, which violates the assumptions of the theorem.
\end{remark}

\begin{remark}
The converse of Theorem \ref{thm1} is true. 
If the ratio of two linear recurrences defined over a number field $K$ is again a linear recurrence over $K$, then the ratio belongs to a finitely generated ring for all but finitely many $(m, n) \in \N^2$. 

In particular, if there exists a polynomial $P(X)$ such that $\frac{d_{m, n}P(n)U(m)}{V(n)}$ is a multi-recurrence and $V(n)/P(n)$ is a linear recurrence, where 
$\log |d_{m, n}|=o(n)$, then $d_{m, n}U(m)/V(n)$ is contained in a finitely generated ring $\mathcal{R}$. Note that any finitely generated ring can be taken as a subring of $\mathcal{O}_{K, S}$ by choosing the set $S$ of places of $K$ appropriately. To prove the converse of our result, we choose $S$ large enough so that the coefficients of $P(X), U(m), V(n)$ and the roots of $U(m), V(n)$ are $S$-units. Hence, we get $\dfrac{d_{m, n}P(n)U(m)}{V(n)}\in \mathcal{O}_{K, S}$. Since the coefficients of $P(X)$ are $S$-units, we can divide by $P(X)$ and get $d_{m, n}U(m)/V(n)\in \mathcal{O}_{K, S}$. Here, $\mathcal{O}_{K, S}$ is finitely generated by the coefficients and roots of recurrences. There is no change in the proof if $m=n$. Hence, the converse.
 \end{remark}

Using specialization arguments, we can obtain the following more general result as a consequence of Theorem \ref{thm1}.

\begin{corollary}\label{cor1}
Let $U(m), V(n)$ be linear recurrences such that their roots together generate a torsion-free multiplicative group. Let $\mathcal R$ be a finitely generated subring of $\mathbb{C}$. 
\begin{enumerate}[label=\roman*)]
\item Assume that for infinitely many $n\in \mathbb{N}$ there exist non-zero positive integers $d_{n}$ such that $\log |d_{n}|=o(n)$, $V(n)\neq 0$, and $d_{n} U(n)/V(n) \in \mathcal R $. Then there exists a non-zero polynomial $P(X) \in \mathbb{C}[X]$ such that $ d_{n}P(n)U(n)/V(n)$ and $V(n)/P(n)$ are linear recurrences.
\item Assume that for all but finitely many $(m, n)\in \mathbb{N}^2$ with $m\neq n$, there exist non-zero positive integers $d_{m, n}$ such that $\log |d_{m, n}|=o(n)$, $V(n)\neq 0$, and $d_{m,n} U(m)/V(n) \in \mathcal R $. Then there exists a non-zero polynomial $P(X) \in \mathbb{C}[X]$ such that 
$d_{m, n}P(n)U(m)/V(n)$
is a multi-recurrence and $V(n)/P(n)$ is a linear recurrence.
\end{enumerate}
\end{corollary}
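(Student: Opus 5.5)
We deduce Corollary \ref{cor1} from Theorem \ref{thm1} by a specialization argument, in the spirit of Corvaja--Zannier \cite{corv}. Let $A$ be the finitely generated $\Z$-algebra generated by $\mathcal R$, the coefficients of the polynomials $u_i$, $v_j$ in $U,V$, the roots $\alpha_i,\beta_j$ of $U$ and $V$, and their inverses. Let $L$ be its fraction field, a finitely generated extension of $\Q$. If $L$ is a number field, the corollary is exactly Theorem \ref{thm1}, so we may assume $\operatorname{trdeg} L\ge 1$.

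We use ring homomorphisms $\sigma: A\to \overline{\Q}$. By Hilbert's irreducibility theorem, or by specialization results for finitely generated groups (e.g.\ Masser's or Zannier's specialization lemma), we choose $\sigma$ with three properties.
\begin{enumerate}[label=(\alph*)]
\item The restriction of $\sigma$ to the finitely generated multiplicative group $\Gamma=\langle \alpha_i,\beta_j\rangle$ is injective. Then $\sigma(\Gamma)$ is still torsion-free and the specialized recurrences $U^\sigma, V^\sigma$ keep distinct roots.
\item $\sigma$ does not kill the relevant leading coefficients, so that $U^\sigma, V^\sigma$ keep the same shape.
\item The specialization does not destroy the infinitude of the $n$ with $V(n)\ne0$. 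This follows from (a), the Skolem--Mahler--Lech theorem (Theorem \ref{thmskm}) and non-degeneracy: $V^\sigma$ is a nonzero non-degenerate recurrence, so it has finitely many zeros.
\end{enumerate}
Applying $\sigma$ to $d_n U(n)/V(n)\in\mathcal R$ gives $d_n U^\sigma(n)/V^\sigma(n)\in\sigma(\mathcal R)$, a finitely generated subring of the number field $K_\sigma=\sigma(L)$. The $d_n$ are integers, so they are fixed by $\sigma$ and the growth condition $\log|d_n|=o(n)$ is preserved. The same holds for part (ii) with $d_{m,n}$.

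Theorem \ref{thm1} then gives, for each such $\sigma$, a polynomial $P_\sigma$ such that $V^\sigma(n)/P_\sigma(n)$ is a linear recurrence. In case (i), $d_nP_\sigma(n)U^\sigma(n)/V^\sigma(n)$ is also a linear recurrence; in case (ii), $d_{m,n}P_\sigma(n)U^\sigma(m)/V^\sigma(n)$ is a multi-recurrence. The key step is to lift this back to a structural statement over $L$.

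The point is that the conclusion that $V(n)/P(n)$ is a recurrence is a divisibility in the Laurent polynomial ring. Write $V(n)=\tilde V(n,\beta^n)$ with $\tilde V\in L[X][Y_1^{\pm1},\dots,Y_s^{\pm1}]$, where $\beta_1,\dots,\beta_s$ is a basis of $\Gamma$ (free because $\Gamma$ is torsion-free). Similarly write $U$ as $\tilde U$. Then the conclusion in (i) says that $\tilde V$ factors as $P(X)\,W(Y)$ times a factor dividing $\tilde U$. For each $n$, the values at $Y=\beta^n$ are determined by the polynomials.

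So we argue as follows. Let $P$ be the content of $\tilde V$ with respect to $Y$, i.e.\ the gcd in $L[X]$ of its coefficients. Set $\tilde V=P\cdot \tilde V_1$ with $\tilde V_1$ primitive, and reduce to showing that $\tilde V_1$ divides $\tilde U$ up to factors in $L[Y^{\pm1}]$ and the relevant monomials. If this failed, there would be an irreducible factor $Q(X,Y)$ of $\tilde V_1$ that depends genuinely on $Y$ and does not divide $\tilde U$. By a Hilbert-irreducibility style choice of $\sigma$ we can keep $Q^\sigma$ non-dividing, and keep it genuinely dependent on $Y$, after specialization. That would contradict the conclusion for $\sigma$, because the polynomial $P_\sigma$ can only absorb factors in $X$ alone.

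Case (ii) is handled identically with two sets of variables $(X_1,Y)$ and $(X_2,Y')$, corresponding to $m$ and $n$.

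The main obstacle is making this transfer rigorous. We must guarantee that a single specialization simultaneously preserves the injectivity on $\Gamma$ and the non-divisibility of the finitely many relevant factors. Our first attempt would be to invoke the paper's underlying formulation (via Laurent polynomials and Theorem \ref{thm1}'s proof), where the conclusion is derived from a divisibility $\tilde V\mid P\cdot\tilde U$. That divisibility is an algebraic identity over $L$, and it is preserved and reflected by generic specializations, since the set of $\sigma$ failing any one of these finitely many conditions is a thin set.
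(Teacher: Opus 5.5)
Your overall plan (specialize to $\overline{\Q}$ keeping $\Gamma$ injective, invoke the number-field result, pull back) matches the paper's. However, the pull-back step, which you yourself flag as the main obstacle, has a genuine gap. The properties you propose to preserve are that $Q^\sigma\nmid\tilde U^\sigma$, that $Q^\sigma$ still involves $Y$, and that the leading coefficients stay nonzero. These do not contradict Theorem \ref{thm1} for $\sigma$, because $Q^\sigma$ can acquire an $X$-content while its $Y$-part becomes a factor of $\tilde U^\sigma$.

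For example, work over $L=\Q(t)$ with one torus variable $Y$ (say $Y\leftrightarrow 2^n$). Take $\tilde V=Q=(X-t)Y+(X-1)$ and $\tilde U=Y+2-t$, which are coprime over $L$. Under $t\mapsto1$ you get $\tilde V^\sigma=(X-1)(Y+1)$ and $\tilde U^\sigma=Y+1$, and all of your conditions hold. Yet $\tilde V^\sigma/\gcd(\tilde U^\sigma,\tilde V^\sigma)=X-1$, so the conclusion for $\sigma$ holds with $P_\sigma=X-1$ and nothing is contradicted.

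An irreducible-factor argument would need $Q^\sigma$ to stay irreducible, i.e.\ a Hilbert irreducibility statement, \emph{simultaneously} with injectivity on $\Gamma$. Your appeal to thin sets does not give this: injectivity on $\Gamma$ is an infinite family of conditions $\sigma(\gamma)\neq1$ for $\gamma\in\Gamma\setminus\{1\}$, not one of finitely many thin conditions. Also, non-monomial factors lying in $L[Y^{\pm1}]$ cannot be discarded, and multiplicities matter.

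The missing idea, which is how the paper proceeds, is to make the contradiction depend on a single non-vanishing condition.
\begin{enumerate}[label=(\arabic*)]
\item Cancel $\gcd(u,v)$ in the Laurent ring of Lemma \ref{lem2.2}, so that $u,v$ are coprime. If the conclusion fails, $v$ is not a monomial times an element of $\C[X]$.
\item Multiply by monomials so that $u,v\in A[X,T_1,\dots,T_t]$. Then some variable $T_1$ occurs in $v$ in two distinct degrees, and $\mathrm{Res}_{T_1}(u,v)\neq0$.
\item Let $\delta$ be the product of the nonzero coefficients of $u$, $v$ and this resultant. Lemma \ref{lem2} gives $\phi$ with $\phi(\delta)\neq0$ and $\phi|_\Gamma$ injective.
\item The resultant then commutes with $\phi$ and stays nonzero. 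So $\gcd(\phi(u),\phi(v))$ is free of $T_1$, while $\phi(v)$ still has two distinct $T_1$-degrees.
\item Dividing out this $T_1$-free gcd gives coprime recurrences over a number field whose denominator has at least two roots, contradicting Proposition \ref{prop}.
\end{enumerate}
In the example above, $\mathrm{Res}_Y(\tilde U,\tilde V)=(t-1)(X-t+1)$, so $t\mapsto1$ is excluded. No Hilbert irreducibility is needed.

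Finally, in part (ii) you should contradict Proposition \ref{prop} rather than apply Theorem \ref{thm1}(ii) to $U^\sigma,V^\sigma$. The reason is that $V^\sigma$ may acquire finitely many new zeros $n_0$, and each one destroys the infinitely many pairs $(m,n_0)$. So the ``all but finitely many $(m,n)$'' hypothesis is not inherited, whereas Proposition \ref{prop}(ii) needs only infinitely many admissible pairs.
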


\begin{remark}
For the special case where $m=n$, Corollary \ref{cor1} gives Theorem 2 of \cite{corv}.
\end{remark}

\par 
The following corollary can easily be deduced from Corollary \ref{cor1}. 
\begin{corollary}\label{cor2}
Let $U(m), V(n)$ be linear recurrences, and let $\mathcal R$ be a finitely generated subring of $\mathbb{C}$. 
\begin{enumerate}[label=\roman*)]
\item Assume that for infinitely many $n\in \mathbb{N}$ there exist non-zero positive integers $d_{n}$ such that $\log |d_{n}|=o(n)$, $V(n)\neq 0$, and $d_{n} U(n)/V(n) \in \mathcal R $. Then there exist a non-zero polynomial $P(X) \in \mathbb{C}[X]$ and positive integers $q, r$ such that $d_{qn+r}P(qn+r)U(qn+r)/V(qn+r)$ and $V(qn+r)/P(qn+r)$ are linear recurrences.  
\item Assume that for all but finitely many $(m, n)\in \mathbb{N}^2$ with $m\neq n$, there exist non-zero positive integers $d_{m, n}$ such that $\log |d_{m, n}|=o(n)$, $V(n)\neq 0$, and $d_{m,n} U(m)/V(n) \in \mathcal R $. Then there exist a non-zero polynomial $P(X) \in \mathbb{C}[X]$ and positive integers $q, r_1, r_2$ such that $d_{qm+r_1, qn+r_2}P(qn+r_2)U(qm+r_1)/V(qn+r_2)$
is a multi-recurrence and $V(qn+r_2)/P(qn+r_2)$ is a linear recurrence.
\end{enumerate}
\end{corollary}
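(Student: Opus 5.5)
The plan is to reduce Corollary \ref{cor2} to Corollary \ref{cor1} by passing to arithmetic progressions, which kills the torsion in the group generated by the roots. Let $\Gamma\subset\C^\times$ be the multiplicative group generated by all roots $\alpha_i$ of $U$ and $\beta_j$ of $V$. Since $\Gamma$ is a finitely generated abelian group, its torsion subgroup is finite, say of order $q$. Then $\Gamma^q=\{\gamma^q:\gamma\in\Gamma\}$ is torsion-free: if $(\gamma^q)^k=1$, then $\gamma$ is torsion, so $\gamma^q=1$.

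For each residue $r\in\{1,\dots,q\}$ put $U_r(m)=U(qm+r)$ and $V_r(n)=V(qn+r)$. Writing $U(qm+r)=\sum_i u_i(qm+r)\alpha_i^r(\alpha_i^q)^m$ shows that $U_r$ is again a linear recurrence, with polynomial coefficients and roots among the $\alpha_i^q$. The same holds for $V_r$, with roots among the $\beta_j^q$. Hence the roots of $U_{r_1}$ and $V_{r_2}$ together lie in $\Gamma^q$, so they generate a torsion-free group. Roots may coalesce, but after collecting terms this only changes the coefficient polynomials, and the torsion-free property is inherited by any subgroup.

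For (i), the infinite set of $n$ in the hypothesis meets some residue class $r\pmod q$ in an infinite set. Along that class set $n=qn'+r$ and $d'_{n'}=d_{qn'+r}$. Then $\log d'_{n'}=o(qn'+r)=o(n')$, and $V_r(n')\neq0$ and $d'_{n'}U_r(n')/V_r(n')\in\mathcal R$ hold for infinitely many $n'$. Corollary \ref{cor1}(i) then gives a polynomial $\tilde P$ such that $d'_{n'}\tilde P(n')U_r(n')/V_r(n')$ and $V_r(n')/\tilde P(n')$ are linear recurrences. Set $P(X)=\tilde P((X-r)/q)$, so that $P(qn'+r)=\tilde P(n')$; this is exactly the claim, with $r$ positive as required.

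For (ii), fix any residues $r_1,r_2$. Take pairs $(m',n')$ with $qm'+r_1\neq qn'+r_2$, and excluding the finitely many exceptional pairs. These pairs satisfy the hypothesis with $d'_{m',n'}=d_{qm'+r_1,qn'+r_2}$, and $\log d'=o(n')$ as before. The one technical obstacle is that Corollary \ref{cor1}(ii) requires the hypothesis for all but finitely many $(m',n')$ with $m'\neq n'$, whereas here the excluded diagonal is $qm'+r_1=qn'+r_2$.

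The simplest fix is to choose $r_1=r_2=r$, so that the two conditions coincide: $qm'+r\neq qn'+r$ if and only if $m'\neq n'$. Corollary \ref{cor1}(ii) then applies to $U_r,V_r$. It yields $\tilde P$ such that $d'\tilde P(n')U_r(m')/V_r(n')$ is a multi-recurrence and $V_r/\tilde P$ is a linear recurrence. Substituting $P(X)=\tilde P((X-r)/q)$ as in part (i) gives the statement with $r_1=r_2=r$. The only points needing care are that the reindexed $d$'s still satisfy $\log|d|=o(n')$ and that $V_r(n')\neq 0$ on the relevant set, and both are inherited directly from the hypotheses.
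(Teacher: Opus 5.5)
Your proposal is correct and follows the paper's route: restrict to arithmetic progressions modulo the order $q$ of the torsion of $\Gamma$, so that the restricted recurrences have roots in the torsion-free group $\Gamma^q$, and then apply the torsion-free result. Your write-up is slightly more careful than the paper's in four places: you invoke Corollary \ref{cor1} rather than Theorem \ref{thm1}, which needs $\mathcal R$ inside a number field; you take $r_1=r_2$ so that the excluded diagonal becomes exactly $m'\neq n'$; you set $P(X)=\tilde P((X-r)/q)$ explicitly; and you choose $r\in\{1,\dots,q\}$ so that $r$ is positive.
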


The organization of this paper is as follows. In the next section, we give a definition of the Weil height, moving functions, moving hyperplanes, and various results, including Schmidt's subspace theorem, Skolem-Mahler-Lech theorem etc., which are needed for the proofs of the above theorem. In Section \ref{sec3}, we prove Theorem \ref{thm1} and present specialization arguments for Corollary \ref{cor1}. The proof of our result is inspired by the methods of \cite{corv}.

\section{Preliminaries}\label{sec2}
\subsection{Heights and Schmidt's subspace theorem }
Let $K\subset \mathbb{C}$ be a number field. Let $M_K$ be the set of places of $K$ and $M_K^{\infty}$ be the set of all archimedean places of $K$ and $M_K^0 = M_K\setminus M_K^{\infty}$. Let $S$ be a finite set of places of $K$ containing $M_K^{\infty}$. The ring of $S$-integers is 
\[\mathcal{O}_{K, S} := \{u\in K: |u|_{\omega} \leq  1  ~~ \forall \omega \notin S\}\]
and the group of $S$-units is defined as
\[\mathcal{O}_{K, S}^{\times} := \{u\in K: |u|_{\omega} = 1  ~~ \forall \omega \notin S\}.\]

For $x\in K^{\times}$, we put 
\begin{align*}
|x|_{\omega}:= 
\begin{cases}
|\sigma(x)|^2 & \text{if } \omega \in M_K \text{ is a complex place};  \\
|\sigma(x)| & \text{if } \omega \in M_K \text{ is a real place; and } \\
(\mathcal{O}_{K, S}:\mathfrak{p})^{ord_{\mathfrak{p}}(x)} & \text{if } \omega \text{ corresponds to a prime ideal } \mathfrak{p} \\& \text{ in the ring of integers } \mathcal{O}_{K, S}.
\end{cases}    
\end{align*}
Here, $\sigma$ is said to be a real embedding if $\omega$ is a real place and complex embedding if $\omega$ is a complex place. With this notation, the product formula is valid for all non-zero $x$ in $K$.

Further, the absolute logarithmic Weil height $h(x)$ is defined as
$$
h(x):=\sum_{\omega\in M_K}\log \mbox{ max}\{1,|x|_\omega\} \mbox{ for all } x\in K.
$$
Note that this height is independent of the choice of the number field $K$ containing $x$. For a vector $\textbf{x} =(x_1,\ldots,x_n)\in K^n$  and for a place $\omega\in M_K$, the $\omega$-norm for  $\textbf{x}$, denoted by $\|\textbf{x}\|_\omega$, is defined by 
$$
\Vert\textbf{x}\Vert_\omega:=\mbox{max}\{|x_1|_\omega,\ldots,|x_n|_\omega\}
$$
and  the logarithmic height,  $h(\textbf{x})$, is defined by 
$$
h(\textbf{x}):=\sum_{\omega\in M_K}\log \Vert\textbf{x}\Vert_\omega.
$$
	
Now we are ready to state a more general version of Schmidt's subspace theorem, which was formulated by Schlickewei \cite{Subspace}.
	
\begin{theorem}[Subspace Theorem] \label{schli}
Let $K$ be a number field and $N \geq 1$ be an integer. Let $S$ be a finite set of places of $K$ which contains all the archimedean places of $K$.  For each $\omega \in S$, let $L_{\omega,1}, \ldots, L_{\omega,N}$ be linearly independent linear forms in variables $X_1,\ldots,X_N$ with coefficients in $K$. For any $\varepsilon>0$, the set of solutions $\mathbf{x} \in \mathcal{O}_{K, S}^N$ to the inequality 
\begin{equation*}
\log  \Big(\prod_{\omega\in S}\prod_{i=1}^N {|L_{\omega, i}(\mathbf{x})|_{\omega}}\Big)< -\varepsilon h({\bf x)}
\end{equation*}
lies in a finite union of hyperplanes of $K^N$ defined over $K$.
\end{theorem}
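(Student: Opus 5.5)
The statement above is Schlickewei's $S$-adic form of Schmidt's Subspace Theorem. The paper only quotes it, so I would not reprove it from scratch. Instead I would reduce it to the standard projective formulation and indicate how that is proved.

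\emph{Reduction to the normalized form.} First I normalize. Since $\mathbf{x}\in\mathcal{O}_{K,S}^N$, we have $\|\mathbf{x}\|_\omega\le 1$ for every $\omega\notin S$. Hence $h(\mathbf{x})\ge\sum_{\omega\in S}\log\|\mathbf{x}\|_\omega$. Dividing each $|L_{\omega,i}(\mathbf{x})|_\omega$ by $\|\mathbf{x}\|_\omega$ turns the hypothesis into
\[
\sum_{\omega\in S}\sum_{i=1}^N\log\frac{|L_{\omega,i}(\mathbf{x})|_\omega}{\|\mathbf{x}\|_\omega}<-\varepsilon' h(\mathbf{x}),
\]
up to an additive change of $(N-1)h(\mathbf{x})$ and a bounded constant. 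Here I use that $\sum_{\omega\notin S}\log\|\mathbf{x}\|_\omega\le 0$ and the product formula. The set of $\mathbf{x}$ with bounded height modulo units is finite up to scaling, so it can be absorbed into finitely many hyperplanes. What remains is the projective statement: points of $\mathbb{P}^{N-1}(K)$ satisfying the normalized inequality lie in finitely many proper linear subspaces.

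\emph{Proof of the projective statement (Schmidt--Schlickewei).}
\begin{enumerate}
\item Partition solutions into finitely many classes according to the approximate sizes of the local quantities $\log\bigl(|L_{\omega,i}(\mathbf{x})|_\omega/\|\mathbf{x}\|_\omega\bigr)/h(\mathbf{x})$, which lie in a compact set. This gives a parametric family of adelic convex bodies $\Pi(Q)$, indexed by $Q=H(\mathbf{x})$, each containing the corresponding solution.
\item Apply the adelic Minkowski second theorem (Bombieri--Vaaler) and Davenport's lemma. These show that the last successive minimum $\lambda_N(Q)$ is large while $\lambda_1(Q)\le Q^{-\delta}$. So the solutions sit in a subspace $T(Q)$ spanned by the first few minima.
\item Prove that $T(Q)$ takes only finitely many values; this is the core. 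For several very different parameters $Q_1\ll\cdots\ll Q_m$, construct via Siegel's lemma an auxiliary multihomogeneous polynomial on the exterior powers with large index along the points. Then use Roth's lemma, or Faltings' product theorem in Evertse's version, to show it cannot vanish to too high an index at the attached Plücker points. Comparing the upper bound from the construction with the Liouville-type lower bound gives a contradiction unless the subspaces coincide.
\item Finitely many classes times finitely many subspaces then yields the finite union of hyperplanes.
\end{enumerate}

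\emph{Main obstacle.} The main obstacle is step 3, the non-vanishing/index estimate. This is where the ineffective part lives and where the precise interplay among the parameters $Q_j$ must be chosen. I would follow Evertse's simplified treatment of the parametric subspace theorem and otherwise cite \cite{Subspace} directly.
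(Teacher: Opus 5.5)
The paper gives no proof of this statement; it quotes Schlickewei's $S$-adic Subspace Theorem from \cite{Subspace}. Deferring to the literature, as you ultimately do, is therefore the same route, and your outline of the Schmidt--Schlickewei argument is an acceptable summary. However, the one step you actually carry out, the reduction to the normalized projective form, is wrong as written.

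For $\mathbf{x}\in\mathcal{O}_{K,S}^N$ and the paper's height $h(\mathbf{x})=\sum_{\omega\in M_K}\log\|\mathbf{x}\|_\omega$, the fact you invoke ($\|\mathbf{x}\|_\omega\le 1$ for $\omega\notin S$) gives $h(\mathbf{x})\le\sum_{\omega\in S}\log\|\mathbf{x}\|_\omega$, not $\ge$. That is exactly the direction needed, since it yields
\[
\sum_{\omega\in S}\sum_{i=1}^N\log\frac{|L_{\omega,i}(\mathbf{x})|_\omega}{\|\mathbf{x}\|_\omega}
=\log\prod_{\omega\in S}\prod_{i=1}^N|L_{\omega,i}(\mathbf{x})|_\omega-N\sum_{\omega\in S}\log\|\mathbf{x}\|_\omega
<-(N+\varepsilon)\,h(\mathbf{x}).
\]
This is precisely the hypothesis of the normalized Subspace Theorem on $\mathbb{P}^{N-1}(K)$. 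Any constant coming from the coefficients of the $L_{\omega,i}$ is absorbed by shrinking $\varepsilon$ and invoking Northcott.

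The shift is exactly $N h(\mathbf{x})$, and there is no slack. Your target ``$<-\varepsilon' h(\mathbf{x})$ up to an additive change of $(N-1)h(\mathbf{x})$'' does not give a true statement. Take $K=\Q$, $S=\{\infty,p\}$, $L_{\omega,i}=X_i$ and $\mathbf{x}=(1,p^k)$. The left-hand side above then equals exactly $-2h(\mathbf{x})=-Nh(\mathbf{x})$. So any normalized bound weaker than $-(N+\varepsilon)h(\mathbf{x})$ has infinitely many solutions $[1:p^k]$, which do not lie in finitely many points of $\mathbb{P}^1$.

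Two smaller points. First, absorbing the solutions of bounded height into hyperplanes needs $N\ge 2$, since a line of $K^N$ lies in a hyperplane only then. In fact the statement as printed fails for $N=1$. With $K=\Q$, $S=\{\infty\}$ and $L=X_1/2$, the points $x=\pm1$ have height $0$ under either of the paper's height definitions and satisfy $\log|x/2|<0$. Yet they avoid the only hyperplane $\{0\}$ of $K^1$, so one should assume $N\ge 2$, as standard formulations do. Second, in your step 1 the ratios $\log\bigl(|L_{\omega,i}(\mathbf{x})|_\omega/\|\mathbf{x}\|_\omega\bigr)/h(\mathbf{x})$ are bounded above but not below. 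They lie in a compact set only after you truncate them from below, which is harmless because their sum is already less than $-(N+\varepsilon)$.
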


We also need some additional definitions to prove our results. The height of a polynomial 
\[f(x_1,\ldots,x_n)=\sum_{{\bf i}=(i_1,\ldots, i_n)}a_{{\bf i}}x_1^{i_1}\cdots x_n^{i_n} \in K[x_1,\ldots,x_n]\]
is defined as
\[h(f):=\sum_{\omega\in M_K}\log \max_{{\bf i}}\{|a_{\bf i}|_{\omega}\}.\]
We set
\[\Vert{f}\Vert_\omega:= \max_{{\bf i}}\{|a_{\bf i}|_{\omega}\},\]  for $\omega \in M_K$. The
Weil function for the hyperplane $H \subseteq \mathbb{P}^n(K)$ defined by the linear form 
\[L({\bf x})=a_0x_0+\cdots+a_nx_n\] is given by
\[\lambda_{H,\omega}({\bf x}):=\log \Big (\frac{\Vert\textbf{x} \Vert_\omega \Vert{L}\Vert_\omega}{|a_0x_0+\cdots+a_nx_n|_{\omega}}\Big)\] for ${\bf x}=[x_0:\cdots: x_n]\in \mathbb{P}^n(K)\backslash H$ and $\omega \in M_K$.

\subsection{Moving functions and moving hyperplanes}
Let $\Lambda$ be an infinite index set and $A\subseteq \Lambda$ be a fixed infinite subset. A moving hyperplane indexed by $\Lambda$ over $K$ is a map $H:\Lambda \rightarrow \mathbb{P}^n(K)^{\times},$ defined by $\alpha \rightarrow H(\alpha)$, where $ H(\alpha)$ is determined by the equation of the form
\[a_0(\alpha)x_0+\cdots+a_n(\alpha)x_n=0,\]
and $a_0(\alpha),\ldots, a_n(\alpha) \in K,$ not all zero. Note that $a_0,\ldots, a_n$ are all maps defined over $\Lambda$.
	
More generally, we define a collection of moving polynomials, $f_i(\alpha)$ indexed by $\Lambda$, for $i=1,\ldots, q$, of degree $d_i$ as 
\[f_i(\alpha)=\sum_{{\bf i}\in \mathcal{I}_{d_i}}a_{i,{\bf i}}(\alpha)x_1^{i_1}\cdots x_{n_i}^{i_{n_i}} \in K[x_1,\ldots,x_{n_i}],\]
where $\mathcal{I}_{d_i}$ is the set containing all monomials in $x_1,\ldots, x_{n_i}$ of degree less than or equal to $d_i$.

We also need the concept of moving points that are non-degenerate with respect to a collection of moving hyperplanes. For $i=1,\ldots, n$, consider a collection of maps $x_i:\Lambda \rightarrow K$ such that for all $\alpha \in \Lambda$, at least one $x_i(\alpha)\neq 0$. Such maps define moving points 
\[{\bf x}(\alpha)=[x_0(\alpha):\cdots :x_n(\alpha)] \in \mathbb{P}^n(K)\] for each $\alpha \in \Lambda$.
We need the following lemma to prove our result for the case $m\neq n$.
\begin{lemma}\label{lemmagcd}
Let $K$ be a number field, $S$ be a finite set of absolute values of $K$ containing the archimedean ones. Let ${\bf u}=[u_0:\cdots:u_n]$, where $u_0,\ldots, u_n: \Lambda \rightarrow  \mathcal{O}_{K, S}^{\times}$ is a sequence of maps. Let $H_{\alpha} \subset \mathbb{P}^n$ be a collection of moving hyperplanes defined by the linear forms $L_{\alpha}(x)\in K[x_0,\ldots, x_n]$ and with coefficients indexed by $\alpha \in \Lambda$. Assume that $h(L_{\alpha})=o(h({\bf u(\alpha)}))$ for all $\alpha \in \Lambda$. Let $\epsilon >0$. Then either
\begin{enumerate}[label=\roman*)]
\item there exists an infinite index subset $A$ of $\Lambda$ such that
\[\sum_{\nu \in S}\lambda_{H_{\alpha},\nu}({\bf u}(\alpha))<\epsilon h(({\bf u}(\alpha)))\] for all $\alpha \in A$; or
\item there exists an infinite index subset $A$ of $\Lambda$ and indices $i, j$ with $0\leq i\neq j \leq n$, such that
\[h(u_i(\alpha)/u_j(\alpha))=o(h({\bf u}(\alpha)))\] for all $\alpha \in A$.
\end{enumerate}
\end{lemma}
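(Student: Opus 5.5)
We argue by induction on $n$, the dimension of the projective space, and use Theorem \ref{schli} with the moving hyperplanes frozen along a suitable subsequence. Suppose (i) fails. Then for every infinite $A\subseteq\Lambda$ there is some $\alpha\in A$ with $\sum_{\nu\in S}\lambda_{H_\alpha,\nu}(\mathbf u(\alpha))\ge \epsilon h(\mathbf u(\alpha))$, so we may pass to an infinite $A_0$ on which this inequality holds for all $\alpha$. We may also assume $h(\mathbf u(\alpha))\to\infty$ along $A_0$. Otherwise, by Northcott, $\mathbf u(\alpha)$ takes finitely many values on an infinite subset, and then the ratios $u_i/u_j$ have bounded height. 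Bounded height does not by itself give $o(h(\mathbf u(\alpha)))$, so this degenerate case needs a small separate remark: either we normalise so that "$o(\cdot)$" is read along subsequences with unbounded height, or we note that the statement is vacuous there.

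Write $L_\alpha=\sum_k a_k(\alpha)x_k$. Since $u_k(\alpha)$ is an $S$-unit, scaling $\mathbf u(\alpha)$ by an $S$-unit lets us assume the point is integral. The key step is to express $\lambda_{H_\alpha,\nu}$ through the fixed linear forms $L_{\nu,0},\dots,L_{\nu,n}$ chosen as follows. For each $\nu$, fix an index set of $n$ coordinate forms $x_k$, $k\ne k_\nu(\alpha)$, together with $L_\alpha$ itself. Passing to a subsequence, $k_\nu$ and the support of $L_\alpha$ are constant. The moving coefficients are handled as in Vojta/Ru's moving-target trick. Because $h(L_\alpha)=o(h(\mathbf u(\alpha)))$, we can replace $\mathbf u(\alpha)$ by the vector $\mathbf y(\alpha)=(a_k(\alpha)u_k(\alpha))_{k\in\operatorname{supp}L_\alpha}$, at a cost of $o(h(\mathbf u(\alpha)))$ in both the height and the local Weil functions. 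Then $L_\alpha(\mathbf u)=y_0+\dots+y_m$, a fixed hyperplane in the $y$-coordinates, and the $y_k$ are $S$-units up to the small error coming from the coefficients.

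Using the product formula for the $S$-units $y_k$, the hypothesis becomes
\[
\prod_{\nu\in S}\frac{|y_0+\cdots+y_m|_\nu\prod_{k\ne k_\nu} |y_k|_\nu}{\|\mathbf y\|_\nu^{m+1}}\le H(\mathbf y)^{-\epsilon+o(1)}.
\]
Here $k_\nu$ is the index attaining $\|\mathbf y\|_\nu$, and it is constant on a further subsequence. This is exactly the situation of Theorem \ref{schli}, applied with $\epsilon/2$. It is legitimate after clearing denominators: the $a_k(\alpha)$ generate a field extension we can control by enlarging $S$, or, more robustly, we work with the $S$-unit vector $\mathbf u$ and absorb the coefficients into the error term. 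The conclusion is that the $\mathbf y(\alpha)$, for $\alpha$ in an infinite subset, lie in one hyperplane $\sum c_ky_k=0$, that is, $\sum c_k a_k(\alpha)u_k(\alpha)=0$.

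If two terms remain in a minimal vanishing subsum, we get $u_i/u_j=-c_ja_j/(c_ia_i)$, which has height $o(h(\mathbf u(\alpha)))$; this is (ii). Otherwise we use the relation to eliminate one coordinate, which reduces to a lower-dimensional $S$-unit point with the same hypotheses, and we apply induction. The base case $n=1$ is immediate, since then the relation forces two terms. The main obstacle is the moving-coefficient step. Theorem \ref{schli} needs fixed forms over a fixed field, whereas the $a_k(\alpha)$ vary. I would first try to handle this by treating $a_k(\alpha)u_k(\alpha)$ as new "almost $S$-unit" coordinates and bounding the non-$S$ contributions by $h(L_\alpha)=o(h(\mathbf u))$. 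If that fails, I would fall back on Ru–Vojta's moving Schmidt subspace theorem with coefficients in a coherent subring.
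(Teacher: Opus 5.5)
The step that fails is replacing $\mathbf u(\alpha)$ by $\mathbf y(\alpha)=(a_k(\alpha)u_k(\alpha))_{k\in \mathrm{supp}\,L_\alpha}$ ``at a cost of $o(h(\mathbf u(\alpha)))$''. If the support $J$ is a proper subset of $\{0,\dots,n\}$, then $h(\mathbf y)$ is close to $h(\mathbf u_J)$, not to $h(\mathbf u)$. Because the $u_k$ are $S$-units, one has exactly
\[
\sum_{\nu\in S}\lambda_{H_\alpha,\nu}(\mathbf u)=h(\mathbf u)-h(\mathbf u_J)+\sum_{\nu\in S}\lambda_{H'_\alpha,\nu}(\mathbf u_J),
\]
where $H'_\alpha$ is the same form in the coordinates $x_J$, and $h(\mathbf u)-h(\mathbf u_J)$ need not be small. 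In fact the lemma as printed is false here. Take $K=\Q$, $S=\{\infty,2,3\}$, $n=1$, $L_\alpha=x_0$ and $\mathbf u(m)=[2^m:3^m]$. Then $\sum_{\nu\in S}\lambda_{H,\nu}(\mathbf u(m))=h(\mathbf u(m))=m\log 3=h(u_0/u_1)$, so neither (i) (for $\epsilon\le 1$) nor (ii) holds.

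You therefore need the extra hypothesis that all coefficients of $L_\alpha$ are non-zero. This is harmless in Proposition~\ref{prop}, where the coefficients are the $v_i(n)$, non-zero for large $n$. It is exactly what makes $\{x_k\}_{k\neq k_\nu}\cup\{L_\alpha\}$ independent at every $\nu$, and your induction must stay inside this class. It does if you take a minimal vanishing subsum $\sum_{k\in I}c_ka_k(\alpha)u_k(\alpha)=0$ and, for $|I|\ge 3$, pass to $\mathbf u'=(u_k)_{k\in I\setminus\{k_0\}}$ with $L'_\alpha=\sum_{k\in I\setminus\{k_0\}}c_ka_kx_k$. Since $L'_\alpha(\mathbf u')=-c_{k_0}a_{k_0}u_{k_0}$, one gets $\sum_{\nu\in S}\lambda_{H'_\alpha,\nu}(\mathbf u')=h(\mathbf u')+O(h(L_\alpha)+1)$. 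So on any infinite set where $h(\mathbf u')\ge \eta h(\mathbf u)$, alternative (i) fails for $\epsilon=1/2$ and the inductive hypothesis gives (ii). If no such set exists for any $\eta>0$, then $h(\mathbf u')=o(h(\mathbf u))$ and (ii) is immediate. That case split is what ``the same hypotheses'' has to mean.

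The second gap is the one you flag yourself. Enlarging $S$ cannot absorb the $a_k(\alpha)$. They lie in the fixed field $K$, so there is no extension to control, but the primes in their numerators and denominators are unbounded as $\alpha$ varies. Hence $\mathbf y(\alpha)$ is not integral for any fixed $S'$, and Theorem~\ref{schli} does not apply to it directly. Your first fallback is the right fix. Apply the projective form of the subspace theorem to $\mathbf y(\alpha)\in\mathbb P^n(K)$ with the fixed forms $Y_k$ ($k\neq k_\nu$) and $\Sigma=Y_0+\cdots+Y_n$. This form follows from Theorem~\ref{schli} after one fixed enlargement of $S$ making $\mathcal O_{K,S}$ principal, using primitive representatives. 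With all $a_k\neq 0$, the product formula gives
\[
|h(\mathbf y)-h(\mathbf u)|\le (n+1)h(L_\alpha),\qquad \sum_{\nu\in S}\sum_{k=0}^{n}\log\frac{\|\mathbf y\|_\nu}{|y_k|_\nu}=(n+1)h(\mathbf y)-E,\quad 0\le E\le (n+1)h(L_\alpha),
\]
and
\[
0\le \lambda_{H_\alpha,\nu}(\mathbf u)-\lambda_{\Sigma,\nu}(\mathbf y)=\log\bigl(\|\mathbf u\|_\nu\|L_\alpha\|_\nu/\|\mathbf y\|_\nu\bigr)\le \log\max_{j,k}|a_j/a_k|_\nu .
\]
So all errors are $O(h(L_\alpha))=o(h(\mathbf u))$, and failure of (i) gives a total Weil sum of at least $(n+1+\epsilon/2)h(\mathbf y)$, which is enough. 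Accordingly, your displayed inequality should have exponent $-(m+1)-\epsilon+o(1)$, not $-\epsilon+o(1)$.

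With these two repairs your argument is a complete proof. The paper itself offers nothing beyond the citation \cite[Lemma 3.4]{gcd}. Your route shows that, for a single moving hyperplane at an $S$-unit point, rescaling the coordinates freezes the target, so only the fixed-target subspace theorem is needed.
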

\begin{proof}
See \cite[Lemma 3.4]{gcd}.
\end{proof}

For a positive real number $a$, we denote $\log^-(a):=\min \{0, \log (a)\}$. 
\begin{lemma}\label{V(n)}
Let $V(n)=\sum_{i=1}^r v_i(n)\beta_i^n$ be a non-degenerate algebraic linear recurrence sequence with roots and coefficients over a number field $K$. Let $\nu\in M_K$ be such that $|\beta_i|_{\nu}\geq 1$ for some $i$ and let $\epsilon>0$. Then 
\begin{equation}\label{bound}
 -\log^{-}|V(n)|_{\nu}<\epsilon n
\end{equation} for all but finitely many $n\in \N$.
\end{lemma}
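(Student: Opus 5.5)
We argue by contradiction, using the Subspace Theorem (Theorem \ref{schli}) applied to a fixed linear form evaluated at points whose coordinates are $S$-integers. Since the inequality \eqref{bound} only concerns the single place $\nu$, we may enlarge $S$ freely. We therefore take a finite set $S\subset M_K$ containing $M_K^\infty$ and $\nu$, large enough that all roots $\beta_i$ are $S$-units and all coefficients of the polynomials $v_i$ are $S$-integers. We may assume every $v_i$ is non-zero. Write $v_i(X)=\sum_{j=0}^{D_i} a_{ij}X^j$ with $a_{i,D_i}\neq 0$.

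\emph{Setup.} Suppose, for contradiction, that $-\log^-|V(n)|_\nu\ge \epsilon n$ for infinitely many $n$. Consider the points
\[{\bf x}(n)=(x_{ij}(n))_{i,j},\qquad x_{ij}(n)=n^j\beta_i^n .\]
Their coordinates lie in $\mathcal O_{K,S}$, since $n^j\in\Z$ and the $\beta_i$ are $S$-units, and they satisfy $h({\bf x}(n))\le Cn$. The fixed linear form $L=\sum_{i,j}a_{ij}X_{ij}$ satisfies $L({\bf x}(n))=V(n)$.

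Choose $i_0$ with $|\beta_{i_0}|_\nu=\max_i|\beta_i|_\nu$; by hypothesis this maximum is $\ge 1$. Set $d_0=D_{i_0}$, so that $a_{i_0d_0}\ne0$. The linear forms are then:
\begin{itemize}
\item at $\nu$: the form $L$ together with all coordinate forms $X_{ij}$ for $(i,j)\neq(i_0,d_0)$, which are linearly independent precisely because $a_{i_0d_0}\ne0$;
\item at every $w\in S\setminus\{\nu\}$: all coordinate forms.
\end{itemize}

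\emph{Estimate.} The double product in Theorem \ref{schli} equals
\[\frac{|V(n)|_\nu}{|x_{i_0d_0}(n)|_\nu}\prod_{w\in S}\prod_{i,j}|x_{ij}(n)|_w .\]
Since the $\beta_i^n$ are $S$-units, the product formula shows the last double product equals $\prod_{w\in S}\prod_{i,j}|n|_w^{j}$, which is at most $n^{C'}$. Moreover, $|x_{i_0d_0}(n)|_\nu=|n|_\nu^{d_0}|\beta_{i_0}|_\nu^n\ge n^{-C''}$, using $|\beta_{i_0}|_\nu\ge1$ and $|n|_\nu\ge n^{-[K:\Q]}$ when $\nu$ is finite. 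Hence along our infinite set of $n$ the product is at most
\[e^{-\epsilon n}\,n^{C'+C''}<e^{-(\epsilon/2)n}\le e^{-(\epsilon/2C)h({\bf x}(n))}\]
for $n$ large. The Subspace Theorem then places infinitely many of the points ${\bf x}(n)$ in a single hyperplane $\sum c_{ij}X_{ij}=0$ with $c\ne0$.

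\emph{Contradiction.} For infinitely many $n$ this gives $W(n):=\sum_i\big(\sum_j c_{ij}n^j\big)\beta_i^n=0$. Here $W$ is a linear recurrence whose roots are among the $\beta_i$, so it is non-degenerate, and it is not identically zero because $c\neq0$ and the functions $n^j\beta_i^n$ are linearly independent. By the Skolem--Mahler--Lech theorem, a non-degenerate, non-zero recurrence has only finitely many zeros. This contradiction proves \eqref{bound} for all but finitely many $n$.

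\emph{Main obstacle.} The delicate step is the height and valuation bookkeeping. Two things must be checked carefully: the polynomial factors $n^j$ must contribute only $e^{o(n)}$ at every place of $S$, including the lower bound for $|n|_\nu$ when $\nu$ is finite; and the choice of the dropped coordinate $(i_0,d_0)$ must simultaneously guarantee linear independence at $\nu$ and use the hypothesis $|\beta_{i_0}|_\nu\ge1$. The latter is exactly what prevents $|x_{i_0d_0}(n)|_\nu$ from being exponentially small.
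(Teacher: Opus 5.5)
Your proof is correct. The paper does not actually prove this lemma; it only cites Grieve--Wang [Lemma 4.1]. So your argument is a self-contained alternative rather than a reconstruction of the paper's route.

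You derive the bound from the two tools already stated in the paper, Theorem \ref{schli} and Theorem \ref{thmskm}. The key device is splitting each term $v_i(n)\beta_i^n$ into the separate coordinates $n^j\beta_i^n$. This makes the linear form $L$ independent of $n$, so the classical Subspace Theorem applies and no moving-target machinery is needed. The price is more coordinates and the bookkeeping for the factors $n^j$, which you handle correctly: the product formula for the $S$-units $\beta_i^n$, the bound $\prod_{w\in S}|n|_w\le n^{[K:\Q]}$, and $|n|_\nu\ge n^{-[K:\Q]}$ at a finite $\nu$.

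Compare this with the approach that fits the framework of Lemma \ref{lemmagcd}: take the $v_i(n)$ as coefficients of a moving hyperplane and $[\beta_1^n:\cdots:\beta_r^n]$ as the moving point. That route needs $r\ge 2$, so that the height of the moving point grows linearly and dominates $h(H_n)=O(\log n)$. It then uses non-degeneracy to exclude alternative (ii) of Lemma \ref{lemmagcd}, and it requires a separate treatment of a single root. Your linearization handles all cases uniformly, using non-degeneracy only in the final Skolem--Mahler--Lech step.

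Your computation also gives more than is asked. If you bound $|x_{i_0d_0}(n)|_\nu$ below by $n^{-C''}|\beta_{i_0}|_\nu^n$ instead of discarding the factor $|\beta_{i_0}|_\nu^n\ge 1$, the same argument yields the classical growth estimate $|V(n)|_\nu\ge(\max_i|\beta_i|_\nu)^n e^{-\epsilon n}$ for all large $n$. The lemma is an immediate corollary of that estimate. The citation in the paper buys brevity; your version buys self-containment and this stronger statement.
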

\begin{proof}
    See \cite[Lemma 4.1]{gcd}.
\end{proof}

\section{Proof of Main Results}\label{sec3}
\subsection{Some results on recurrence sequences}
The zero set of a linear recurrence $\{U(n)\}$ is defined as 
\[\mathcal{Z}(U)= \{n\in \N: U(n)=0\}.\]
The Skolem-Mahler-Lech theorem can be stated as follows:
\begin{theorem}[Skolem-Mahler-Lech]\label{thmskm}
Let $\{U(n)\}$ be a linear recurrence sequence defined over a field of characteristic $0$. Then the set $\mathcal{Z}(U)$ is a union of a finite number of arithmetic progressions and a finite set.     
\end{theorem}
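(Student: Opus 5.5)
The plan is the classical $p$-adic argument of Skolem, Mahler and Lech. Write the recurrence in matrix form $U(n)=\mathbf{a}^{T}A^{n}\mathbf{b}$, where $A$ is the $k\times k$ companion matrix of $f(z)$, $\mathbf{a}=(1,0,\ldots,0)^T$ and $\mathbf{b}=(U(0),\ldots,U(k-1))^T$. If $c_0=0$, then $f(z)$ has $0$ as a root, and shifting the index and lowering the order changes $\mathcal Z(U)$ only by a finite set. So we may assume $c_0\neq 0$, i.e.\ $A\in GL_k$.

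\textbf{Step 1 (reduction to $\mathbb{Z}_p$).} Let $R=\mathbb{Z}[c_0,\ldots,c_{k-1},c_0^{-1},U(0),\ldots,U(k-1)]$, a finitely generated domain of characteristic $0$. I would invoke the embedding lemma of Cassels/Lech: there are infinitely many primes $p$ with an injective ring map $R\hookrightarrow\mathbb{Z}_p$. Fixing one such $p$, we may regard $A\in GL_k(\mathbb{Z}_p)$ and $\mathbf{b}\in\mathbb{Z}_p^k$, with $\det A\in\mathbb{Z}_p^\times$ because $c_0^{-1}\in R$. For the number field $K$ actually used in this paper, this step is elementary: pick a prime $\mathfrak p$ of $\mathcal O_K$ of degree one, not dividing $2$, at which all the data are $\mathfrak p$-adic integers and $c_0$ is a unit. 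Then work in $K_{\mathfrak p}\cong\mathbb{Q}_p$.

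\textbf{Step 2 (periodicity and analytic interpolation).} The reduction of $A$ lies in the finite group $GL_k(\mathbb{Z}/p^2\mathbb{Z})$, so some $N\geq1$ gives $A^N=I+p^2B$ with $B\in M_k(\mathbb{Z}_p)$. For each residue $0\le r<N$ and $m\in\mathbb{N}$, expand
\[
U(r+Nm)=\mathbf{a}^TA^r(I+p^2B)^m\mathbf{b}=\sum_{j\ge0}\binom{m}{j}p^{2j}\,\mathbf{a}^TA^rB^j\mathbf{b}.
\]
I would show that this equals $f_r(m)$ for a power series $f_r(x)=\sum_{\ell} e_\ell x^\ell\in\mathbb{Q}_p[[x]]$ with $|e_\ell|_p\to0$, so that $f_r$ converges on all of $\mathbb{Z}_p$. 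The coefficient estimate comes from writing $\binom{m}{j}=\frac{1}{j!}\,m(m-1)\cdots(m-j+1)$ and using $v_p(j!)\le j/(p-1)$. The factor $p^{2j}$ then outweighs the denominators $j!$, and rearranging the double sum gives coefficients $e_\ell$ tending to $0$. An equivalent route is to write $(I+p^2B)^m=\exp(m\log(I+p^2B))$, where both series converge because of the $p^2$.

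\textbf{Step 3 (Strassmann).} By Strassmann's theorem, a nonzero power series converging on $\mathbb{Z}_p$ has only finitely many zeros in $\mathbb{Z}_p$. Hence for each $r$, either $f_r\equiv0$, in which case the whole progression $r+N\mathbb{N}$ lies in $\mathcal Z(U)$, or $U(r+Nm)=0$ for only finitely many $m$. Taking the union over the $N$ residues shows that $\mathcal Z(U)$ is a finite union of arithmetic progressions together with a finite set, as claimed.

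\textbf{Main obstacle.} The delicate point is Step 1 in full generality, namely producing a $p$-adic embedding of an arbitrary finitely generated characteristic-$0$ ring in which the relevant elements become units. Over a number field, which is all that is needed in this paper, this reduces to choosing a suitable prime of degree one. The convergence estimate in Step 2 is the other place needing care, and using $p^2$ rather than $p$ (or assuming $p>2$) is what makes it work cleanly.
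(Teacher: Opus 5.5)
The paper gives no proof of this statement. It only quotes the result, citing Skolem for $\mathbb{Q}$, Mahler for algebraic numbers and Lech for arbitrary fields of characteristic $0$. Your proposal is a correct rendering of the classical $p$-adic argument behind those citations:
\begin{itemize}
\item the reduction to $c_0\neq 0$ only translates $\mathcal Z(U)$ and changes it by a finite set;
\item injectivity of the embedding is what makes the two zero sets agree;
\item $\det A=\pm c_0\in\mathbb{Z}_p^{\times}$ gives $A^N\equiv I \pmod{p^2}$;
\item your bound $v_p\bigl(p^{2j}/j!\bigr)\ge 2j-j/(p-1)\ge j$, valid also for $p=2$, gives $v_p(e_\ell)\ge \ell$ and justifies the rearrangement of the double series on $\mathbb{Z}_p$;
\item Strassmann's theorem then settles each residue class.
\end{itemize}

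What your argument adds over the bare citation is self-containedness. In general the only black box is Cassels' embedding theorem. Where the paper actually invokes the result, the recurrences are over a number field $K$, or over $\bar{\mathbb{Q}}$ after the specialisation in the proof of Corollary~\ref{cor1}. There your Step 1 reduces to choosing an unramified prime of residue degree one.

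The paper mostly uses the theorem in the form ``a non-zero non-degenerate recurrence has only finitely many zeros.'' That needs one more line after your conclusion. Suppose $U$ vanished on all of $r+N\mathbb{N}$. Then $m\mapsto \sum_i u_i(r+Nm)\alpha_i^r(\alpha_i^N)^m$ would be identically zero. Its roots $\alpha_i^N$ are pairwise distinct by non-degeneracy, so uniqueness of the polynomial-exponential representation forces every $u_i=0$.
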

This result was proved for linear recurrences over the rationals by Skolem \cite{skolem}. It was extended to linear recurrences over algebraic numbers by Mahler \cite{Mahler1} and later generalized to $\C$ by Lech \cite{lech} (see also \cite{Mahler3}, \cite{Mahler2}).
We also need the following lemma to prove our result (see \cite{Rumely}, \cite{VdP}). This says that the divisibility properties of recurrence sequences, such as co-primality, can be understood in the same sense as in the Laurent ring $\C[X, T_1, \ldots, T_t, T_1^{-1}, \ldots, T_t^{-1}]$.

\begin{lemma}\label{lem2.2}
Let $\Gamma \subset \C^{\times}$ be a torsion-free multiplicative subgroup of rank $t\geq 1$. The ring of linear recurrences $R_{\Gamma}$, whose roots belong to $\Gamma$ is isomorphic to the ring $\C[X, T_1, \ldots, T_t, T_1^{-1}, \ldots, T_t^{-1}]$. In particular, it is a unique factorisation domain.
\end{lemma}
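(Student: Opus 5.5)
The plan is to realise the ring of recurrences concretely as a ring of Laurent polynomials by choosing a basis of $\Gamma$. Since $\Gamma$ is torsion-free of rank $t$, it is free abelian: pick $\gamma_1,\ldots,\gamma_t$ with $\Gamma=\gamma_1^{\Z}\cdots\gamma_t^{\Z}$. Every element of $R_\Gamma$ can be written as $U(n)=\sum_i u_i(n)\alpha_i^n$ with $\alpha_i\in\Gamma$. Writing $\alpha_i=\gamma_1^{a_{i1}}\cdots\gamma_t^{a_{it}}$, I define
\[
\Phi\Big(\sum_{\mathbf a\in\Z^t} p_{\mathbf a}(X)\,T_1^{a_1}\cdots T_t^{a_t}\Big)(n):=\sum_{\mathbf a}p_{\mathbf a}(n)\,(\gamma_1^{a_1}\cdots\gamma_t^{a_t})^n .
\]
This gives a map $\C[X,T_1^{\pm1},\ldots,T_t^{\pm1}]\to R_\Gamma$. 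Here $R_\Gamma$ is viewed as a subring of the ring of all complex sequences under pointwise operations, and $n\mapsto n$ is fixed as the image of $X$.

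The map $\Phi$ is a ring homomorphism: it is additive, and multiplicativity follows because $(\gamma^{\mathbf a})^n(\gamma^{\mathbf b})^n=(\gamma^{\mathbf a+\mathbf b})^n$ and $p(n)q(n)=(pq)(n)$. It is surjective by the choice of basis, since every root lies in $\Gamma$ and the coefficients are arbitrary polynomials.

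The main point is injectivity, which says that a polynomial-exponential function with distinct roots vanishing identically has all coefficients zero. Torsion-freeness and the freeness of the basis guarantee that distinct exponent vectors $\mathbf a\neq\mathbf b$ give distinct roots $\gamma^{\mathbf a}\neq\gamma^{\mathbf b}$. So it suffices to show that if $\sum_{i=1}^r p_i(n)\alpha_i^n=0$ for all $n\in\N$ with pairwise distinct $\alpha_i\in\C^\times$, then every $p_i=0$. I would prove this via generating functions. The function $\sum_n p_i(n)\alpha_i^n z^n$ is a rational function $Q_i(z)/(1-\alpha_i z)^{\deg p_i+1}$ with $\deg Q_i\le \deg p_i$, and it is nonzero if $p_i\neq0$, since $p_i(n)\neq 0$ for large $n$. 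Its only pole is at $1/\alpha_i$, and the pole is genuine when $p_i\ne0$: after cancelling $Q_i$ against the denominator, a pole must remain, because the power series is not a polynomial. Since the $1/\alpha_i$ are distinct, poles cannot cancel in the sum, so the sum can vanish only if every $p_i$ is zero. Equivalently, one can use a Vandermonde-type (confluent) determinant argument. This gives $\ker\Phi=0$.

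Finally, the Laurent polynomial ring over $\C$ is the localisation of the UFD $\C[X,T_1,\ldots,T_t]$ at the multiplicative set generated by the primes $T_1,\ldots,T_t$. A localisation of a UFD is a UFD, so $R_\Gamma$ is a UFD. I expect the only delicate step to be injectivity, and it is the place where torsion-freeness is used. With torsion, $\zeta^n$ for a root of unity $\zeta\neq1$ would give distinct roots but extra relations in the exponent lattice, so $\Gamma$ would fail to be free and no such isomorphism would exist.
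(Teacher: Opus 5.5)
Your proof is correct and is essentially the standard argument behind the result the paper simply cites (Lemma 2.1 of \cite{corv}): send $X\mapsto(n\mapsto n)$ and $T_i\mapsto(n\mapsto\gamma_i^n)$ for a basis $\gamma_1,\ldots,\gamma_t$ of $\Gamma$, get injectivity from uniqueness of polynomial-exponential representations (your generating-function pole argument is a valid proof of this), and get the UFD property by localising $\C[X,T_1,\ldots,T_t]$. The one point to make explicit is that $\Gamma$ must be finitely generated, as it always is in the paper's applications, since a torsion-free group of finite rank need not be free: for $\Gamma=\{2^{q}: q\in\Z[1/2]\}$, which has rank $1$, the sequence $2^n-1=(2^{n/2}-1)(2^{n/2}+1)=\cdots$ splits into arbitrarily many non-unit factors in $R_\Gamma$, so the conclusion genuinely fails there.
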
	
\begin{proof}
See Lemma 2.1 \cite{corv}.
\end{proof}

\subsection{A key result}
 The following proposition is an important result to prove Theorem \ref{thm1}. 

\begin{proposition}\label{prop}
Let $K$ be a number field, $S$ be a finite set of absolute values of $K$ containing the archimedean ones. Let $U(m), V(n)$ be linear recurrences with roots and coefficients in $K$. Suppose that the roots of $U$ and $V$ generate a torsion-free multiplicative subgroup $\Gamma$ of $K^{\times}$. Suppose that there exist non-zero integers $d_{m, n}$ with $\log |d_{m, n}|=o(n)$ such that $d_{m, n}U$ and $V$ are co-prime with respect to $\Gamma$. Also, assume that $V$ has more than one root. Then the following assertions are true:
\begin{enumerate}[label=\roman*)]
\item  There exist only finitely many $n\in \N$ in the case $m=n$ such that we can find non-zero integers $d_{n}$ satisfying $d_{n}U(n)/V(n) \in \mathcal{O}_{K, S}$.
\item There does not exist infinitely many pairs of natural numbers $(m, n)$ in the case $m\neq n$ with the property that $m=o(n)$ and there exist non-zero integers $d_{m, n}$ satisfying $d_{m, n}U(m)/V(n) \in \mathcal{O}_{K, S}$.
\end{enumerate}
\end{proposition}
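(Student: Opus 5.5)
We follow the Corvaja--Zannier strategy \cite{corv}, arguing by contradiction with Schmidt's subspace theorem (Theorem \ref{schli}) applied to the vector of monomials in the roots. Enlarge $S$ so that all roots $\beta_i$ of $V$, all roots of $U$, and the relevant coefficients are $S$-units. Write $V(n)=\sum_{i=1}^r v_i(n)\beta_i^n$ with $r\ge 2$. Suppose there are infinitely many $n$ (resp.\ pairs $(m,n)$ with $m=o(n)$) with $d U/V\in\mathcal O_{K,S}$. Put $w:=dU(m)/V(n)\in \mathcal O_{K,S}$ (with $m=n$ in case (i)).

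The main step exploits the identity $w\,V(n)=d\,U(m)$, i.e.
\[ \sum_{i} v_i(n)\, w\beta_i^n = d\sum_j u_j(m)\alpha_j^m. \]
Following \cite{corv}, we multiply by monomials $\beta^{\mathbf k n}$ of bounded degree $h$ in the $\beta_i$. This produces a large system in which the unknowns are the $S$-integers $w\beta^{\mathbf a n}$ together with the $S$-units $\alpha^{\mathbf b m}\beta^{\mathbf c n}$. The coefficients are polynomials in $n$ and $m$ together with the integer $d$, so their heights are $O(\log n)+o(n)=o(n)$. We then choose linear forms at the places of $S$. At the place $\nu$ where $|V(n)|_\nu$ is ``small'', the quantity $w\beta^{\mathbf a n}$ is replaced by the expansion of $dU\beta^{\mathbf a n}/V$ as a truncated series in the dominant-ratio variables. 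At the other places we use the coordinate forms. Here Lemma \ref{V(n)} gives $-\log^-|V(n)|_\nu<\epsilon n$, so $V$ is never abnormally small, and the truncation error is $\ll H^{-\delta n}$ with $\delta>0$ independent of $\epsilon$.

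Because these coefficients move with $n$, we would actually invoke the moving form of the argument. In case (ii), Lemma \ref{lemmagcd} handles the moving hyperplanes with $h(L_\alpha)=o(h(\mathbf u(\alpha)))$, and the hypothesis $m=o(n)$ makes the $\alpha^m$ terms negligible in height relative to $n$. This yields either an inequality of subspace type violated infinitely often, or, by alternative (ii) of Lemma \ref{lemmagcd}, $h(\beta_i^n/\beta_j^n)=o(n)$ for some $i\ne j$. The latter is impossible, since $\beta_i/\beta_j$ is not a root of unity, by torsion-freeness and hence non-degeneracy.

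Hence infinitely many of the points lie in a fixed hyperplane. This gives a nontrivial linear relation among the $w\beta^{\mathbf a n}$ and the monomials, with coefficients polynomial in $n$ (and $o(n)$-height in $d$). Passing to an infinite subsequence, we may assume a single relation holds throughout. We then get $Q(n)\,w = R(m,n)$, where $Q$ is a nonzero linear recurrence in $n$ with roots in $\Gamma$ and $R$ is a (multi-)recurrence; the $d$ factor is absorbed, as in the construction, by its appearing linearly. Combining with $wV(n)=dU(m)$ gives $dU(m)Q(n)=R(m,n)V(n)$ on infinitely many indices. We aim to upgrade this to an identity in the ring of Lemma \ref{lem2.2}. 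When $m=n$ this follows by Skolem--Mahler--Lech (Theorem \ref{thmskm}) after restricting to an arithmetic progression and using the independence of the characters in $\Gamma$. Via the isomorphism with $\C[X,T^{\pm1}]$, which is a UFD, coprimality of $dU$ and $V$ forces $V\mid Q$. We would arrange the construction (degree $h$ large relative to the number of monomials) so that $Q$ has strictly fewer monomials, or a smaller Newton polytope, than any multiple of $V$ with $r\ge2$ terms, which gives the contradiction. When $m\ne n$ the identity is read in two independent exponent variables, and $m=o(n)$ separates the $n$-part.

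The main obstacle is the combinatorial counting in the subspace step. We must choose the degree $h$ and the set of monomials so that the exponent gain $\delta n\cdot(\#\text{forms})$ beats the height loss $\epsilon n$ plus $o(n)$ from $\log|d|$ and the polynomial coefficients, while the resulting hyperplane still yields a $Q$ not divisible by $V$. We would first try exactly the Corvaja--Zannier choice of monomials $\beta^{\mathbf a n}$ with $|\mathbf a|\le h$, checking that the extra $o(n)$ terms only perturb $\epsilon$.
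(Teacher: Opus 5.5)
\textbf{Part (i): the key device is missing.} The paper, following Corvaja--Zannier, first fixes a place $v\in S$ at which the roots of $V$ do \emph{not} all have the same $v$-adic absolute value. Such a place exists because $\beta_i/\beta_j$ is not a root of unity, so $h(\beta_i/\beta_j)>0$. It normalizes so that the largest of these absolute values is $1$ and writes $V=V_1-W$, where the roots of $V_1$ lie in $\Gamma^*=\{\gamma\in\Gamma:|\gamma|_v=1\}$ and $|W(n)|_v\ll\delta^n$. The small forms at $v$ come from $V_1(n)^qe_n\approx d_nU(n)\sum_{i<q}\binom{q}{i}V(n)^{q-1-i}W(n)^i$. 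The coordinates containing $e_n$ are only $n^l\underline{\gamma}^{n\mathbf b}e_n$ with $\underline{\gamma}^{\mathbf b}\in\Gamma^*$.

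In your set-up the $w$-coordinates run over monomials in \emph{all} the $\beta_i$. So your points already satisfy the exact relations $\sum_i v_i(n)\,w\beta_i^n\beta^{\mathbf{k}n}=d\,\beta^{\mathbf{k}n}U(n)$. The hyperplane returned by the Subspace Theorem may be exactly one of these, giving $Q=V$ times a monomial, and then $V\mid Q$ contradicts nothing. Your hope of arranging ``fewer monomials or a smaller Newton polytope than any multiple of $V$'' is not a mechanism. In the paper the contradiction is structural. The nonzero recurrence $C'$ in $e_nC'(n)=B(n)$ has all its roots in $\Gamma^*$. Hence $V\mid C'$ (from Lemma \ref{lem2.2} and coprimality) forces all roots of $V$ to have the same $v$-adic absolute value, contradicting the choice of $v$.

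The moving-target detour is also unnecessary, and it does not fit. The paper puts the factors $n^l$ (via multiplication by $n^z$, $z<k$) and $d_n$ into the coordinates. The forms are then fixed and Theorem \ref{schli} applies directly. The count $\binom{p+h}{p}k$ versus $\binom{p+h+qD}{p}(k+qD)$ is won by taking $q$ large, $k>3qD$, and then $h$ large. Lemma \ref{lemmagcd} cannot be used on your system in any case, for three reasons:
\begin{enumerate}
\item it needs $S$-unit coordinates, and your $w\beta^{\mathbf{a}n}$ are only $S$-integers;
\item it concerns a single moving hyperplane, not a system of forms;
\item its alternative (i) is an upper bound for a proximity function, not a conclusion that the points lie in a hyperplane.
\end{enumerate}

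\textbf{Part (ii): your route does not close, and the paper argues differently.} You need to upgrade $dU(m)Q(n)=R(m,n)V(n)$, known only on an infinite set of pairs, to an identity in the Laurent ring. Theorem \ref{thmskm} gives nothing of the kind for an arbitrary infinite set of pairs, and ``reading it in two independent exponent variables'' is not an argument. The paper seeks no relation involving $w$ at all. Its steps are:
\begin{enumerate}
\item From $d_{m,n}U(m)/V(n)\in\mathcal{O}_{K,S}$ it bounds the non-$S$ part $\sum_{\nu\notin S}-\log|V(n)|_\nu$ by $\epsilon n$.
\item It applies Lemma \ref{lemmagcd} to the $S$-unit moving point $[\beta_1^n:\cdots:\beta_t^n]$ and the single moving hyperplane $v_1(n)x_1+\cdots+v_t(n)x_t=0$, whose height is $O(\log n)$. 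Alternative (ii) is excluded by non-degeneracy, so the $S$-part of the proximity is also $<\epsilon n$.
\item Summing over all places gives $h(H_n)+n\,h(\beta_1,\ldots,\beta_t)<2\epsilon n$, which is impossible.
\end{enumerate}
Your remark that $m=o(n)$ makes the $U(m)$-side of height $o(n)$ is exactly the right ingredient for step 1, and it gives that step directly. When $U(m)\neq0$, $V(n)$ divides $d_{m,n}U(m)$ in $\mathcal{O}_{K,S}$, so $\sum_{\nu\notin S}-\log|V(n)|_\nu\le h(d_{m,n}U(m))\le h(d_{m,n})+h(U(m))=o(n)$. This is a sounder justification than the paper's appeal to Lemma \ref{V(n)}, which controls only one place at a time.
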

\begin{proof}
Consider the case $m=n$. Without loss of generality, we may choose $S$ large enough so that it is a finite set of absolute values of $K$ that contain all the archimedean absolute values and such that all the roots and non-zero coefficients of $U$ and $V$ are $S$-units in $K$. By hypothesis, roots of $U$ and $V$ generate a torsion-free multiplicative subgroup $\Gamma$ of $K^{\times}$ and $V$ has more than one root. 

Suppose $\beta_i$ and $\beta_j$ are roots of $V$ and $\beta_i/\beta_j$ is a root of unity, i.e., $(\beta_i/\beta_j)^{k} =1$ for some integer $k$. Since $\beta_i/\beta_j\in \Gamma$, which is torsion-free, we can conclude that the ratio of any two roots of $V$ cannot be a root of unity. If the $\nu$-adic absolute value for any two roots $\beta_i$ and $\beta_j$ of $V$ is the same for all $\nu\in M_K$, then we deduce that the absolute logarithmic Weil height $h(\beta_i/\beta_j)=0$, that is, $\beta_i/\beta_j$ is a root of unity, which is a contradiction. So, we can find a place $v\in M_K$ such that not all the roots of $V$ have the same $v$-adic absolute value. Clearly, $v \in S$. Let $\beta$ be the root of $V$ with maximum $v$-adic absolute value. For simplicity, we replace $U(n)$ by $U(n)/\beta^n$ and $V(n)$ by $V(n)/\beta^n$. This will result in the case when the maximal $v$-adic absolute value of the roots of $V(n)$ is $1$. Using this, we can write $V(n)$ as 
$$V(n)=V_1(n)-W(n),$$ where all the roots of $V_1(n)$ have $v$-adic absolute value $1$ and all the roots of $W(n)$ have $v$-adic absolute value strictly less than $1$. Thus, we can find a positive real number $\delta<1$ such that 
\begin{equation}\label{eq2}
|W(n)|_v < c_1\delta^n   
\end{equation}
for some constant $c_1$.

Consider the subgroup $\Gamma^*$ of the free abelian multiplicative group $\Gamma$ formed with elements of $v$-adic absolute value $1$. Since $\Gamma/\Gamma^*$ is torsion-free, $\Gamma^*$ is a primitive subgroup. Then there exists a basis $\beta_1,\ldots,\beta_t$ for $\Gamma$ such that $\beta_1,\ldots,\beta_p$ forms a basis for $\Gamma^*$.
\par
Since the roots of $V_1$ lie in $\Gamma^*$ and we may write 
\begin{equation}\label{eq11}
 V_1(n)=f(n,\beta_1^n,\ldots, \beta_p^n),   
\end{equation}
where $f\in K[X, T_1, T_1^{-1},\ldots, T_p, T_p^{-1}]$. We multiply both $U$ and $V$ by a suitable power of $\beta_1^n\cdots \beta_p^n$, and we may assume that $f$ is a polynomial of total degree, say $D$, in its arguments.
\par 

For $V(n) \neq 0$, set $e_n:=\frac{d_nU(n)}{V(n)}$. On the contrary, assume that there exists an infinite set $\mathcal{M}$ with 
\[\mathcal{M}=\{n\in \N: V(n) \neq 0, e_n \in \mathcal{O}_{K, S}\}.\] 
\par 
First, we will estimate $V_1(n)^q$ for a fixed positive integer $q$, that is,  
\begin{equation*}
V_1(n)^q= (V(n)+W(n))^q=V(n)\left(\sum_{i=0}^{q-1}\binom{q}{i}V(n)^{q-1-i}W(n)^i\right)+W(n)^q.
\end{equation*}
Thus,
\begin{align*}
V_1(n)^qe_n&=V_1(n)^q\frac{d_nU(n)}{V(n)}\\&=U(n)d_n\left(\sum_{i=0}^{q-1}\binom{q}{i}V(n)^{q-1-i}W(n)^i\right)+W(n)^q\frac{d_nU(n)}{V(n)}
\end{align*}
and this implies with \eqref{eq2}
\begin{align}\label{eq3}
\begin{split}
\Bigg|V_1(n)^qe_n-&U(n)d_n\left(\sum_{i=0}^{q-1}\binom{q}{i}V(n)^{q-1-i}W(n)^i\right)\Bigg|_v\\
&=|W(n)^qe_n|_v < c_2\delta^{nq}|e_n|_v.
\end{split}
\end{align}
Denote 
\[{\bf g}=(g_1,\ldots, g_p)\in \Z_{\geq 0}^{p}  \;\mbox{and}\;\;  {\bf b}=(b_1,\ldots, b_p)\in \Z_{\geq 0}^{p}.\]
Fix two other positive integers $h$ and $k$, which we will determine later. For every ${\bf g}$ with $g_1+\cdots+g_p \leq h,$ and every $z \in \Z_{\geq 0}$ with $z<k$, consider 
\begin{align}\label{eq 4}
\begin{split}
\Psi_{{\bf g},z}(n)&:= \left(V_1(n)^qe_n-U(n)d_n\sum_{i=0}^{q-1}\binom{q}{i}V(n)^{q-1-i}W(n)^i\right)n^z\underline{\gamma}^{n{\bf g}}\\&=\left(V_1(n)^qe_n+d_nH(n)\right)n^z\underline{\gamma}^{n{\bf g}},
\end{split}
\end{align}
where we set
$H(n):=-U(n)\sum_{i=0}^{q-1}\binom{q}{i}V(n)^{q-1-i}W(n)^i$ and $\underline{\gamma}^{(a_1,\ldots,a_p)}=\beta_1^{a_1}\cdots \beta_p^{a_p}$. Since $n$ is an integer, $|n|_v \leq n$.
Now using \eqref{eq3} and the fact that $|\beta_i|_v=1$ for $i=1,\ldots, p$, we get 
\begin{equation}\label{eq5}
 |\Psi_{{\bf g},z}(n)|_v <c_2 \delta^{nq}|e_n|_vn^z.  
\end{equation}
Recall from \eqref{eq11} that $V_1$ is a polynomial $f(n, \beta_1^n, \ldots, \beta_p^n)$ of degree less than or equal to $D$ in the variables $n, \beta_1^n, \ldots, \beta_p^n$. Hence, $f$ can be written in the form
\[f(n, \beta_1^n, \ldots, \beta_p^n)=\sum_{{\bf i}:=(i_0, \ldots, i_p)} a_{\bf i}n^{i_0}\beta_1^{ni_1}\cdots \beta_p^{ni_p}.\]
Then we write the first term on the right side of \eqref{eq 4} as 
\begin{align}\label{eq 7}
 \begin{split}
n^z\underline{\gamma}^{n{\bf g}}V_1(n)^qe_n&= (f(n, \beta_1^n, \ldots, \beta_p^n))^q n^z\beta_1^{ng_1}\cdots \beta_p^{ng_p}e_n\\&=\Bigg(\sum_{{\bf i}:=(i_0, \ldots, i_p)} a_{\bf i}n^{i_0}\beta_1^{ni_1}\cdots \beta_p^{ni_p}\Bigg)^q n^z\beta_1^{ng_1}\cdots \beta_p^{ng_p}e_n\\&= \sum_{{\bf b}, l}p_{{\bf b}, l, {\bf g}, z}n^l{\underline\gamma}^{n{\bf b}}e_n,
 \end{split}   
\end{align}
where the coefficients $p_{{\bf b}, l, {\bf g}, z} \in K$.
Observe that $l$ is the power of $n$ in the expression. Since $z<k$, $l$ will be at most $z+qD<k+qD$. The vector ${\bf b}$ is chosen such that the power of each $\beta_i$ is $nb_i$ for $i=1,\ldots, p$. Observe that $b_i$ is at most $g_i+qD$. 
Since $g_1+\cdots+g_p \leq h,$ then the index $({\bf b}, l)$ runs over the vectors $(b_1,\ldots, b_p,l)\in \Z_{\geq 0}^{p+1}$ with $b_1+\cdots+b_p \leq h+qD, 0\leq l < k+qD$.
Let 
\begin{equation}\label{M1}
M_1:=\binom{p+h+qD}{p}\cdot(k+qD). 
\end{equation}
Note that, $M_1$ represents the number of monomials of the form $X^lT_1^{b_1}\cdots\\ T_p^{b_p}$, with    $0\leq l<k+qD$ and $b_1+\cdots+b_p \leq h+qD$. The number of nonzero terms on the right side of \eqref{eq 7} is less than or equal to $M_1$. Next, we choose an ordering for the $M_1$ terms of the form,  $n^{l}{\underline\gamma}^{n{\bf b}}e_n$ with $0\leq l < k+qD$ and $b_1+\cdots+b_p\leq h+qD$. Denote $M_1$ tuples $({\bf b}, l)$ after reordering  by $({\bf b}_1, l_1), \ldots, ({\bf b}_{M_1}, l_{M_1})$.

The second term in the equation \eqref{eq 4} is $n^z\underline{\gamma}^{n{\bf g}}d_nH(n)$. Observe that $H(n)$ may be expressed as a sum of terms of the form $n^{\nu}\alpha^n$, for suitable $\nu$  and $\alpha \in \Gamma$. So, the second term on the right side of \eqref{eq 4} can be written as a linear combination of terms of the form $n^{l}{\underline\gamma}^{n{\bf g}}\alpha^nd_n$, for suitable ${\bf g}, l$ and  $\alpha$. In particular, it is of the form $n^{\nu}\alpha^nd_n$ for suitable quantities $\nu \in \N$ and $\alpha \in \Gamma$. Let $M_2$ denote the number of such terms. We also give an ordering for the $M_2$-mentioned terms. After ordering, let the $M_2$ tuples $({\bf g}, l)$ be $({\bf g}_{M_1+1}, l_{M_1+1}),\ldots, ({\bf g}_{N}, l_{N})$, where  $N=M_1+M_2$.
Hence $\Psi_{{\bf g},z}(n)$ is a linear combination of at most $N$ nonzero terms of the mentioned type.
Fix
\begin{align*}
&{\bf x}(n):=(x_1(n), \ldots, x_{N}(n))\\&=(n^{l_1}{\underline\gamma}^{n{\bf b}_1}e_n, \ldots, n^{l_{M_1}}{\underline\gamma}^{n{\bf b}_{M_1}}e_n, n^{l_{M_1+1}}{\underline\gamma}^{n{\bf g}_{M_1+1}}\alpha^nd_n, \ldots, n^{l_{N}}{\underline\gamma}^{n{\bf g}_{N}}\alpha^nd_n).
\end{align*}
Rewrite \eqref{eq 7} as
\[n^z\underline{\gamma}^{n{\bf g}}V_1(n)^qe_n= A_{{\bf g}, z, 1} x_1(n)+\cdots+A_{{\bf g}, z, M_1} x_{M_1}(n).\]
Note that the coefficients $A_{{\bf g}, z, i}$ for $i=1,\ldots, M_1$ are the same as the coefficients $p_{{\bf b}, l, {\bf g}, z}$ in \eqref{eq 7} in a suitable order.
Also,
\[n^z\underline{\gamma}^{n{\bf g}}d_nH(n)=A_{{\bf g}, z, M_1+1} x_{M_1+1}(n)+\cdots+A_{{\bf g}, z, N} x_{N}(n).\]
Since by our assumption $e_n \in \mathcal{O}_{K, S}$ for all $n \in \mathcal{M}$, the coordinates of the point ${\bf x}(n)$ are $S$-integers for all $n \in \mathcal{M}$.
Further, we define an ordering for the vectors $({\bf g}, z)\in \Z_{\geq 0}^p\times\Z_{\geq 0}$ with $g_1+\cdots+g_p \leq h$ and $0\leq z <k$. Let $M$ be the number of such tuples. Then 
\[M:=\binom{p+h}{p}\cdot k.\]
Since $q>0$ then, from \eqref{M1} it is clear that $M<M_1$. If $({\bf g}^j, z)$ is the $j$-th vector with respect to the chosen ordering, we put 
\begin{equation}\label{eq 8}
L_j(X_1,\ldots, X_N)=\sum_{i=1}^{N} A_{{\bf g}^j, z, i}X_i, \hspace{ 1cm}j=1,\ldots, M,    
\end{equation}
and we observe that $\Psi_{{\bf g}^j,z}(n)= L_j(x_1(n
),\ldots, x_N(n))$. We claim that the linear forms 
\[L_1(X_1, \ldots, X_{M_1}, 0, \ldots, 0), \ldots, L_M(X_1, \ldots, X_{M_1}, 0, \ldots, 0)\] are linearly independent.
\par
From \eqref{eq 8}, we have 
\[L_j(x_1(n), \ldots, x_{M_1}(n), 0, \ldots, 0)=n^z\underline{\gamma}^{n{\bf g}^j}V_1(n)^qe_n.\] Suppose that the linear forms are linearly dependent, then there exists a linear relation for all $n \in \N$ of the form 
$$\sum_{{\bf g}^j, z}c_{{\bf g}^j, z}n^z\underline{\gamma}^{n{\bf g}^j}V_1(n)^qe_n=0,$$ with not all $c_{{\bf g}^j, z}$ equal to $0$. By Theorem \ref{thmskm}, we know that $V_1(n)^qe_n=0$ only for finitely many $n$. Since $\beta_i$ are multiplicatively independent, no ratio of two terms of the form $\underline{\gamma}^{n{\bf g}^j}$ can be a root of unity for two distinct values of ${\bf g}^j$. Thus, by Theorem \ref{thmskm}, $\sum_{{\bf g}^j, z}c_{{\bf g}^j, z}n^z\underline{\gamma}^{n{\bf g}^j}$ is nonzero for large $n$. This gives a contradiction. Hence, $L_1,\ldots, L_M$ are linearly independent.
\par
Renumbering the first $M_1$ terms, if necessary, and using the above claim, we may assume that $L_1,\ldots, L_M, X_{M+1},\ldots, X_N$ are linearly independent.
\par
Define the linear forms $L_{w, j}({\bf X}) \in K[X_1,\ldots, X_N]$ in $N$ variables as follows. For the fixed $v\in S$ and $1\leq j\leq M$, put 
\[L_{v,j}({\bf X})=L_j({\bf X})\]
 and for all other pairs $(w, j) \in S\times\{1,\ldots, N\}$, put
\[L_{{w},j}({\bf X})=X_j.\]
Observe that the linear forms defined above are linearly independent for each $w \in S$. We apply the subspace theorem (Theorem \ref{schli}) for this choice of linear forms. For that, consider 
\begin{equation}\label{eq 9}
\log  \Big(\prod_{w\in S}\prod_{j=1}^N {|L_{w, j}(x_1(n),\ldots,x_{N}(n))|_{w}}\Big).  
\end{equation}
We know that for $j\leq M_1$, $x_j(n)$ are of the form $n^l{\underline\gamma}^{n{\bf b}}e_n$ for a suitable vector $({\bf b}, l)$ depending on $j$. In this case, $x_j(n)=0$ if and only if $e_n$ vanishes. Then, according to Theorem \ref{thmskm}, this will happen only for finitely many $n$. We disregard this finite set, so we assume that  $x_j(n)\neq 0$ for $j=1,\ldots, M_1$. Rewrite \eqref{eq 9} as
\begin{equation}\label{eq 10}
\log \left(\left(\prod_{w\in S}\prod_{j=1}^N {|x_j(n)|_{w}}\right)\left(\prod_{j=1}^M \frac{|L_{v, j}(x_1(n),\ldots,x_{N}(n))|_{v}}{|x_j(n)|_{v}}\right)\right).  
\end{equation}
We apply the subspace theorem to the vectors $(x_1(n),\ldots,x_{N}(n))$, for $n \in \mathcal{M}$ in the statement of the proposition, and by our assumption, this $\mathcal{M}$ is infinite. Note that all the coordinates of these vectors are $ S$-integers. 
\par
We have that the terms $x_j(n)$ are either of the form $n^l{\underline\gamma}^{n{\bf b}}e_n$ (for $j\leq M_1$) or of the form  
$n^l\alpha^nd_n$ (for $M_1< j\leq N$) for suitable integer $l$ and $S$-units $\alpha$ depending on $j$. Let $L$ be an upper bound for all exponents $l$ of $n^l$ in these expressions.
\par 
Upon considering the product, the $S$-unit part vanishes by the product formula. Using the fact that $e_n \in \mathcal{O}_{K, S}$ and $d_n$ are integers, we have
\begin{align*}
&\log \left(\prod_{w\in S}\prod_{j=1}^N {|x_j(n)|_{w}}\right)\\
&=\log \left(\prod_{w\in S}\prod_{j=1}^{M_1} {|n^l{\underline\gamma}^{n{\bf b}}e_n|_{w}}\right)+\log \left(\prod_{w\in S}\prod_{j=M_1+1}^N {|n^l\alpha^nd_n|_{w}}\right)\\& \leq NL\log n+ \log \Bigg(\prod_{w\in S}\prod_{j=1}^{M_1}|e_n|_{w}\Bigg)+\log \Bigg(\prod_{w\in S}\prod_{j=M_1+1}^{N}|d_n|_{w}\Bigg)\\& \leq NL\log n+ M_1 h(e_n)+ (N-M_1)h(d_n),
\end{align*}
where $l$ and ${\bf b}$ depending on $j$. Since $\log |d_n|=o(n)$, then $h(d_n)/n \rightarrow 0$. Hence $h(d_n)< \varepsilon n$, for large values of $n$ and for some $\varepsilon>0$. The above inequality becomes
\begin{equation}\label{eq 11}
\log \left(\prod_{w\in S}\prod_{j=1}^N {|x_j(n)|_{w}}\right)\leq NL\log n+ M_1 h(e_n)+ (N-M_1)\varepsilon n.     
\end{equation}
Since $|{\underline\gamma}^{n{\bf b}}|_v=1$, then for $j=1,\ldots, M$, 
\begin{equation*}
\log |x_j(n)|_{v}= \log |e_n|_v+l \log|n|_v,    
\end{equation*}
for suitable $l \in \{0,\ldots, L\}$, depending on $j$. 
Next, consider the second term of \eqref{eq 10}
\begin{align}\label{eq12}
\begin{split}
&\log \left(\prod_{j=1}^M \frac{|L_{v, j}(x_1(n),\ldots,x_{N}(n))|_{v}}{|x_j(n)|_{v}}\right)\\ &= \sum_{j=1}^M \Bigg(\log (|L_{v, j}(x_1(n),\ldots,x_{N}(n))|_{v})-\log |x_j(n)|_{v} \Bigg)\\&=M \Bigg(\log |\Psi_{{\bf g}^j,z}(n)|_v -(\log |e_n|_v+l \log|n|_v)\Bigg)\\& \leq M(c_3nq\log \delta+2L\log n).
 \end{split}
 \end{align}
Finally, plugging \eqref{eq 11} and \eqref{eq12} in \eqref{eq 10} for large values of $n \in \mathcal{M}$, we get
\begin{align}\label{eq13}
\begin{split}
    &\log  \Big(\prod_{w\in S}\prod_{j=1}^N {|L_{w, j}(x_1(n),\ldots,x_{N}(n))|_{w}}\Big)\\&\leq  M(c_3nq \log \delta+2L\log n)+ NL\log n+ M_1 h(e_n)+ (N-M_1)\varepsilon n.
    \end{split}
\end{align}
Since $e_n=\frac{d_nU(n)}{V(n)}$ then,
\begin{equation}\label{eqh}
h(e_n)=h\Bigg(\frac{d_nU(n)}{V(n)}\Bigg) \leq h(d_n)+h(U(n))+h(V(n))\leq nC_1,
\end{equation}
for large values of $n$, and $C_1$ depends only on recurrences $U$ and $V$. Using \eqref{eqh} in \eqref{eq13}, we find 
\begin{align}\label{eq14}
\begin{split}
    \log  \Big(\prod_{w\in S}\prod_{j=1}^N &{|L_{w, j}(x_1(n),\ldots,x_{N}(n))|_{w}}\Big)\\
    &\leq  (C_2M_1+c_3Mq \log \delta)n+3NL\log n.
    \end{split}
\end{align}
Denote $C_3=C_2/(-\log \delta)$ and hence $C_3$ is a positive real number that depends only on $U$ and $V$. Now we impose conditions on the positive integers $q,h,k$ as follows: choose 
\[q>2C_3/c_3 \text{ and } k>3qD.\]
This gives 
\[c_3qk>2C_3k>\frac{3}{2}C_3(k+qD)>C_3(k+qD).\]
Note that the function $\binom{p+y}{p}$ is a polynomial in $y$ of degree $p$. Hence,
\begin{equation}\label{binom}
\binom{p+h}{p}c_3qk> C_3\binom{p+h+qD}{p}(k+qD)
\end{equation} 
for large $h$. Thus, for fixed $q, D, C_3, k, p$ with $q>2C_3/c_3$ and $k>3qD$, both sides of \eqref{binom} are polynomials in $h$ of degree $p$ with the leading coefficient on the left being larger than the leading coefficient on the right. Therefore, we may choose $h$ large enough so that \eqref{binom} is satisfied. This inequality implies that $c_3Mq>C_3 M_1$ for large $h$. Substituting the value of $C_3$, we obtain $C_2M_1<-c_3Mq \log \delta$. The inequality \eqref{binom} expresses the fact that in vector ${\bf x}(n)$, the coordinates involving $e_n$ are fewer in number.

Also in \eqref{eq14}, $N, L$ are fixed integers and for large values of $n$ we have $\log(n)/n$ tends to $0$. Thus, we can find a constant $C_4>0$, independent of $n$, such that
\begin{equation}\label{eq15}
 \log  \Big(\prod_{w\in S}\prod_{j=1}^N {|L_{w, j}(x_1(n),\ldots,x_{N}(n))|_{w}}\Big)<-C_4n,   
\end{equation}
for large $n \in \mathcal{M}$. 
\par
Since each coordinate of the point ${\bf x}(n)$ has exponential growth at most, we have $h({\bf x}(n)) \leq C_5n$ with $C_5>0$, independent of $n$. Hence \eqref{eq15} implies that
\[ \log  \Big(\prod_{w\in S}\prod_{j=1}^N {|L_{w, j}(x_1(n),\ldots,x_{N}(n))|_{w}}\Big)<-\frac{C_4}{C_5}h({\bf x}(n)).\]
By applying Theorem \ref{schli} with $\varepsilon=\frac{C_4}{C_5}$, we get a non-trivial linear relation of the form 
\begin{equation}\label{eq16}
 C_1'x_1(n)+\cdots+C_N'x_N(n)=0,   
\end{equation}
with $C_i' \in K$ for $i=1,\ldots, N$ not all zero, which is valid for infinitely many $n\in \mathcal{M}$. 
\par 
Recall that for $j\leq M_1$, $x_j(n)$ are of the form $n^l{\underline\gamma}^{n{\bf b}}e_n$ for a suitable vector $({\bf b}, l)$ depending on $j$ and for $M_1<j\leq N$, they are of the form $n^l\gamma^nd_n$ with $\gamma \in \Gamma$. We rewrite \eqref{eq16} as
\[C_1'x_1(n)+\cdots+C_{M_1}'x_{M_1}(n)=C_{M_1+1}'x_{M_1+1}(n)+\cdots+C_N'x_N(n).\]
Thus, we obtain a relation 
\[e_nC'(n)=B(n),\]
for an infinite subsequence of integers $n \in \mathcal{M}$, with linear recurrences $C'(n), B(n)$ with roots in $\Gamma$. Also, all the roots of $C'(n)$ lie in $\Gamma^*$ generated by $\beta_1,\ldots, \beta_p$. 
Observe that if $C_i'=0$ for all $i=1,\ldots, M_1$, then the recurrence $B(n)$ vanishes for an infinite sequence of integers. But by Theorem \ref{thmskm}, we get a contradiction. Hence $C_i'$ cannot be zero for all $i=1,\ldots, M_1$. Thus $C'(n)$ is a non-zero recurrence with roots in $\Gamma^*$.
\par 
Since $e_n=\frac{d_nU(n)}{V(n)}$, so
\[d_nU(n)C'(n)=B(n)V(n),\]
for infinitely many $n \in \mathcal{M}$, where all four recurrences $U, C', B, V$ have their roots in $\Gamma$. Again by Theorem \ref{thmskm}, this relation holds identically. By Lemma \ref{lem2.2}, we get a relation 
\[d_nuc'=bv'\]
in the ring $\C[X, T_1, \ldots, T_t, T_1^{-1}, \ldots, T_t^{-1}]$. Note that under the isomorphism in the Lemma \ref{lem2.2}, integer $d_n$ maps to itself, and we may notate $d_n=d_X$. By assumption, $u, v'$ are co-prime. So $v'$ must divide $c'$.
\par
Since $c'\in \C[X, T_1, \ldots, T_p, T_1^{-1}, \ldots, T_p^{-1}]$, it easily follows that, $v'=f\rho$, where $\rho $ is a product of powers of $T_1, \ldots, T_t$ and 
\[f\in \C[X, T_1, \ldots, T_p, T_1^{-1}, \ldots, T_p^{-1}].\]
This implies that all roots of $V(n)$ have the same $v$-adic absolute value, which gives a contradiction. This completes part (i) of Proposition \ref{prop}.

Now assume the case $m< n$. Let 
\[U(m)=\sum_{i=1}^r u_i(m)\alpha_i^m, \;\;\mbox{ and }\;\; V(n)=\sum_{i=1}^t v_i(n)\beta_i^n.\] We chose $S$ large enough so that roots and coefficients of both $U$ and $V$ are $S$-units. If necessary by dividing $U(m)$ by $\alpha_1^m$ and $V(n)$ by $\beta_1^n$ assume that $\alpha_1=1$ and $\beta_1=1$ without changing the set 
\begin{align*}
\Lambda:=\Big\{(m, n) \in \N^2:& m<n \text{ and } \exists  d_{m, n}\in \Z\backslash \{0\}\\
&\text{ such that}\;\; \dfrac{d_{m, n}U(m)}{V(n)} \in \mathcal{O}_{K, S}\Big\}.
\end{align*}

Suppose that there are infinitely many pairs $(m, n)\in \Lambda$ with $m<n$ and the conclusion of part (ii) does not hold. Since $d_{m, n}U(m)/V(n) \in \mathcal{O}_{K, S}$ for infinitely many pairs $(m, n)\in \Lambda$, then by the definition of group of $S$-integers we have 
\[\Bigg|\frac{d_{m, n}U(m)}{V(n)}\Bigg|_{\nu}\leq 1\] for all $\nu \notin S$, which implies $|d_{m, n}U(m)|_{\nu}\leq |V(n)|_{\nu}$. Hence, \begin{equation*}
\sum_{\nu \notin S}-\log^-\max\{|d_{m, n}U(m)|_{\nu}, |V(n)|_{\nu}\}=\sum_{\nu \notin S}-\log^-|V(n)|_{\nu}.
\end{equation*}
Since $d_{m, n}U$ and $V$ are co-prime and $\beta_1=1$, by Lemma \ref{V(n)}, we get
\begin{equation}\label{eqgcd}
\sum_{\nu \notin S}-\log^-\max\{|d_{m, n}U(m)|_{\nu}, |V(n)|_{\nu}\}=\sum_{\nu \notin S}-\log^-|V(n)|_{\nu}<\epsilon n,
\end{equation}
for all but finitely many $(m,n)\in \Lambda$.

Let $H_n \subseteq\mathbb{P}^{t-1}$ be the moving hyperplane defined by $v_1(n)x_1+\cdots+v_t(n)x_t(n)=0$. Consider the moving points 
\[\beta_n=[\beta_1^n:\cdots:\beta_t^n]:\N\rightarrow \mathbb{P}^{t-1}(K).\]
Since $V$ has more than one root, we can set $t\geq 2$.
For $\nu \notin S$,
\begin{align}\label{eqmoving}
\begin{split}
\lambda_{H_n,\nu}(\beta(n))&:=\log \Big (\frac{\Vert\beta(n) \Vert_{\nu} \Vert H_n\Vert_{\nu}}{|v_1(n)\beta_1^n+\cdots+v_t(n)\beta_t^n|_{\nu}}\Big)\\&=\log \Vert H_n\Vert_{\nu}-\log |V(n)|_{\nu}<\epsilon n
\end{split}
\end{align}
for all but finitely many $n$ satisfying \eqref{eqgcd}.

Since $\Gamma$ is torsion-free and $V(n)$ has more than one root, $\beta_i/\beta_j$ is not a root of unity for $i\neq j$. So, the growth of $h(\beta_i^n/\beta_j^n)$ is same as $h(\beta_1^n,\ldots, \beta_t^n)$. If case (ii) of Lemma \ref{lemmagcd} holds, then we get 
\[h(\beta_i^n/\beta_j^n)=o(h(\beta_1^n,\ldots, \beta_t^n)).\] But both have the same growth, which is a contradiction. So we apply case (i) of Lemma \ref{lemmagcd} for $\epsilon_0=\epsilon/|S|$, we get 
\[\lambda_{H_n,\nu}(\beta(n))<\epsilon_0n\] for infinitely many $n \in \Lambda$. Combining this for $\nu \in S$ with \eqref{eqmoving}, we get
\[h(H_n)+nh(\beta_1,\ldots, \beta_t)=\sum_{\nu \in M_K}\lambda_{H_n,\nu}(\beta(n))<2\epsilon n\]
for infinitely many $n \in \Lambda$.
This is impossible since $h(\beta_1,\ldots, \beta_t)>1$. This completes the proof of part (ii) of Proposition \ref{prop}.
\end{proof}
\subsection{Proof of Theorem \ref{thm1}}
We first prove part (i). Assume that $d_{n}U(n)$ and $V(n)$ are co-prime. If not, we simplify the fraction $d_{n} U(n)/V(n)$ for the condition of co-primality. If $V(n)$ has only one root, then $V(n)=P(n)\beta^n$, where $P$ is a polynomial, and $\beta$ is the only one root of $V$. Clearly, $P(n)\beta^n/P(n)$ is a linear recurrence.
Now consider 
\[\frac{d_{n}P(n) U(n)}{P(n)\beta^n}=\frac{d_{n} U(n)}{\beta^n}.\] Substituting the expression for $U(n)$, we get
\[\frac{\sum_{i=1}^r d_{n}u_i(n)\alpha_i^n}{\beta^n}=\sum_{i=1}^r u_i'(n)\Big(\frac{\alpha_i}{\beta}\Big)^n,\]
which is a linear recurrence with $u_i'(n)=d_nu_i(n)$ and roots $\alpha_i/\beta$.

Similarly, for part (ii), we assume that $d_{m, n}U(m)$ and $V(n)$ are co-prime. If $V(n)$ has only one root, then as in part(i), $V(n)/P(n)$ is a linear recurrence. Also, \[\frac{d_{m, n}P(n) U(m)}{P(n)\beta^n}=\frac{d_{m, n} U(m)}{\beta^n}.\] Substituting the expression for $U(m)$, we get
\[\frac{\sum_{i=1}^r d_{m, n}u_i(m)\alpha_i^m}{\beta^n}=\sum_{i=1}^r u_i'(m, n)\alpha_i^m(\beta^{-1})^n,\]
which is a multi-recurrence with $u_i'(n)=d_{m, n}u_i(n)$ and bases $\alpha_i$ and $\beta^{-1}$. So, if $V$ has only one root, then Theorem \ref{thm1} holds. 

Now suppose that $V$ has more than one root. We apply Proposition \ref{prop}, by choosing appropriate $K$ and $S$. Using part (i) of Proposition \ref{prop}, we get that the set of $n\in \N$ is finite and using part (ii) of Proposition \ref{prop} we find that the set  $\mathcal{M}$ of pairs $(m, n)\in \N^2$ is finite, which contradicts the assumptions of Theorem \ref{thm1}. This completes the proof of Theorem \ref{thm1}.\qed

Next, to deduce the general case Corollary \ref{cor1} from Theorem \ref{thm1}, we use specialisation arguments developed by van der Poorten and Rumely (\cite{Rumely}).  The following lemma we adopt to prove this specialisation part. 
\begin{lemma}[\cite{Rumely}]\label{lem2}
Let $\mathcal{O}$ denote a finitely generated subring of $\C$. Let $\delta \in \mathcal{O}$ be non-zero and let $\Gamma$ be a finitely generated torsion-free subgroup of $\mathcal{O^{\times}}$. Then there exists a ring homomorphism $\phi: \mathcal{O}\rightarrow  \bar{\Q}$ such that $\phi(\delta)\neq0$ and the restriction of $\phi $ to $\Gamma$ is injective.
\end{lemma}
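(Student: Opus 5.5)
This is the van der Poorten–Rumely specialisation lemma, and I would prove it in three steps: reduce to a finitely generated $\Q$-algebra, obtain a maximal ideal by the Nullstellensatz, and then choose a specialisation that avoids finitely many bad algebraic conditions.

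\textbf{Reductions.} Write $\mathcal{O}=\Z[y_1,\ldots,y_s]$. Let $\gamma_1,\ldots,\gamma_t$ be a basis of the free abelian group $\Gamma$, which is free because it is finitely generated and torsion-free. Put $A=\mathcal{O}[1/\delta]\otimes\Q=\Q[y_1,\ldots,y_s,\delta^{-1}]\subset\C$. This is a finitely generated $\Q$-algebra and an integral domain.

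\textbf{Maximal ideals give $\bar\Q$-points.} Any maximal ideal $\mathfrak m$ of $A$ has residue field $A/\mathfrak m$ finite over $\Q$ by Zariski's lemma (the Nullstellensatz). So $A/\mathfrak m$ embeds in $\bar\Q$, and composing gives a homomorphism $\phi:\mathcal{O}\to\bar\Q$. Since $\delta$ is a unit in $A$, we get $\phi(\delta)\neq0$. The elements of $\Gamma$ are units in $\mathcal{O}$, so their images are nonzero.

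\textbf{Injectivity on $\Gamma$.} This is the main obstacle. Injectivity means $\phi(\gamma_1)^{a_1}\cdots\phi(\gamma_t)^{a_t}\neq1$ for every $\mathbf a\in\Z^t\setminus\{0\}$, which is infinitely many conditions. The plan is as follows.

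\begin{enumerate}[label=\arabic*)]
\item Choose a transcendence basis $z_1,\ldots,z_r$ of $\mathrm{Frac}(A)$ over $\Q$. By Noether normalisation, after inverting a suitable nonzero $g\in\Q[z]$, the ring $A[1/g]$ is finite and free over $\Q[z][1/g]$.
\item Every $\Q$-point $z=c$ with $g(c)\neq0$ then extends to a homomorphism $\phi_c$. The task reduces to choosing $c\in\Q^r$ suitably.
\item If $r=0$, then $A$ is already a number field and $\phi$ is an embedding, hence injective. So assume $r\ge1$.
\item Let $\Gamma_0=\Gamma\cap\bar\Q$, the constant part of $\Gamma$. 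It embeds injectively under every $\phi_c$, by choosing the embedding compatibly on the algebraic closure of $\Q$ in $\mathrm{Frac}(A)$.
\item Any $\gamma\in\Gamma\setminus\Gamma_0$ is nonconstant, hence transcendental over $\Q$. Finiteness of the exceptional sets would then come from a Hilbert irreducibility argument combined with heights.
\end{enumerate}

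For step 5, pass to a curve through a generic point, or induct on $r$, reducing to $r=1$. On a curve $C$, each nonconstant $\gamma\in\Gamma$ is a rational function whose height $h(\gamma(c))$ grows like $\deg(\gamma)\,h(c)$, by Weil's height machine. For $\gamma\notin\Gamma_0$, the degree satisfies $\deg(\gamma)\ge\kappa\,\|\mathbf a\|$ with $\kappa>0$, because the degree map on $\Gamma/\Gamma_0$ is a nondegenerate norm-like function. Hence $h(\phi_c(\gamma))\ge\kappa\|\mathbf a\|h(c)-O(\|\mathbf a\|)$, with the error term uniform and linear in $\mathbf a$. Choosing $c$ with $h(c)$ large forces $h(\phi_c(\gamma))>0$ for all $\gamma\in\Gamma\setminus\Gamma_0$, so none of these images is a root of unity, and in particular none equals $1$.

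The mixed case, a product $\gamma\gamma_0$ with $\gamma$ nonconstant and $\gamma_0$ constant, is handled the same way, since the constant factor only shifts the height by $O(1)$ per generator. I expect the uniformity of the height estimate in $\mathbf a$ to be the delicate point. If it proves too delicate, I would fall back on the $p$-adic approach of \cite{Rumely}: embed $\mathcal{O}$ in $\Z_p$ for a suitable prime $p$ so that the $\gamma_i$ become $p$-adic units, then specialise the transcendentals to a $p$-adically close algebraic point.
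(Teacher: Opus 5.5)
The paper gives no proof of this lemma; it is quoted from \cite{Rumely}, so your outline has to stand on its own. Your reductions are fine, and injectivity on $\Gamma_0=\Gamma\cap\bar\Q$ is even automatic, since $A\cap\bar\Q$ is a field.

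The gap is in Step 5, at the mixed elements $\gamma=\gamma'\gamma_0$ with $\gamma'$ nonconstant and $1\neq\gamma_0\in\Gamma_0$. Write $\mathbf a=(\mathbf a_0,\mathbf a_1)$ with respect to a basis of $\Gamma$ extending one of $\Gamma_0$; this is possible since $\Gamma/\Gamma_0$ is torsion-free. The degree only sees the class of $\gamma$ in $\Gamma/\Gamma_0$, so your inequality $\deg(\gamma)\ge\kappa\|\mathbf a\|$ is false. What holds is $\deg(\gamma)\ge\kappa\|\mathbf a_1\|$. Meanwhile the constant factor shifts the height by an amount growing linearly in $\|\mathbf a_0\|$, not by $O(1)$, and $\mathbf a_0$ is unconstrained relative to $\mathbf a_1$. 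Your lower bound therefore degrades to $\kappa\|\mathbf a_1\|h(c)-O(\|\mathbf a_0\|+\|\mathbf a_1\|)$, which is vacuous once $\|\mathbf a_0\|\gg\|\mathbf a_1\|h(c)$.

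This is a defect of the strategy, not of the estimate: no condition of the form ``$h(c)$ large'' can work. Take $\mathcal O=\Z[1/2,x,1/x]$ with $x$ transcendental, $\Gamma=\langle 2,x\rangle$ and $c=2^N$. Then $h(c)=N\log 2$ is as large as you like, yet $\phi_c(x\cdot 2^{-N})=1$. Heights cannot separate $\phi_c(\Gamma\setminus\Gamma_0)$ from $\phi_c(\Gamma_0)$, because the latter contains elements of arbitrarily large height. Your argument does handle the subgroup generated by the nonconstant basis elements, and hence the case $\Gamma_0=\{1\}$.

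The missing idea is an arithmetic, rather than metric, choice of the specialisation. Let $S$ be a finite set of rational primes such that every element of $\Gamma_0$ is a unit at all primes not above $S$. Let $\gamma_{t_0+1},\dots,\gamma_t$ be the nonconstant basis elements. Suppose the vectors $\bigl(v_{\mathfrak p}(\phi(\gamma_j))\bigr)_{\mathfrak p}$, for $t_0<j\le t$ and $\mathfrak p$ running over primes not above $S$, are linearly independent. Then $\phi(\gamma)=1$ forces $\mathbf a_1=0$ by taking valuations, and then $\mathbf a_0=0$ by injectivity on $\Gamma_0$.

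For example, on $\mathbb P^1$, let each $\gamma_j$ be a constant times a product of powers of the $x-\theta_i$. Pick distinct primes $\mathfrak p_i$ not above $S$ at which all $\theta$'s and leading constants are units and the $\theta$'s are pairwise incongruent. Choose $c$ by the Chinese remainder theorem with $v_{\mathfrak p_i}(c-\theta_i)=1$. Then $v_{\mathfrak p_i}(\gamma_j(c))=\mathrm{ord}_{\theta_i}(\gamma_j)$. Some choice of $t-t_0$ of the $\theta_i$ gives a nonsingular matrix, because the $\gamma_j$ are independent modulo constants. On a general curve one does the same with local parameters at points of the supports, using Hilbert irreducibility to control which point of the fibre is taken.

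Your $p$-adic fallback, as sketched, has the same uniformity problem. Closeness to an embedding $\iota:\mathcal O\to\Z_p$ gives only $\phi(\gamma)\equiv\iota(\gamma)\bmod p^n$. Since $\Z_p^\times/(1+p^n\Z_p)$ is finite, a finite-index sublattice of exponents $\mathbf a$ has $\iota(\gamma)\equiv 1\bmod p^n$, and for those the congruence excludes nothing.

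Finally, the reduction to $r=1$ needs its own argument that restricting to the curve makes no further element of $\Gamma$ constant. Over $\bar\Q$ you cannot simply take a ``very general'' curve against countably many conditions. A Bertini-type complete intersection meeting the components of the divisors of the $\gamma_j$ properly does work.
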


\subsection{Proof of Corollary \ref{cor1}} First, we prove part (ii). Assume that the set $\mathcal{M}$ is the set of pairs $(m, n)$ satisfying the conditions of Corollary \ref{cor1}. Clearly $\mathcal{M}$ is an infinite set.  We let $\mathcal{O}$ denote the ring generated over $\mathcal{R}$ by all coefficients, roots and their respective reciprocals of both $U$ and $V$. We denote by $\Gamma$ the group generated by the roots of $U$ and $V$. By assumption, it is a torsion-free group, and we denote the generators of $\Gamma$ by $\gamma_1, \ldots, \gamma_t$. 
\par 
Using Lemma \ref{lem2.2}, we may define an isomorphism which associates the variable $X$ to the function $n \rightarrow n$ and the variables $T_i$ to the function $n\rightarrow \gamma_i^n$. Denote polynomials \[d_{m, n}u, v \in \C[X, T_1, \ldots, T_t, T_1^{-1}, \ldots, T_t^{-1}]\] corresponding to recurrences $d_{m, n}U(m), V(n)$ respectively. Since $d_{m, n}$ are integers, under the isomorphism, they are fixed.
Note that the units in the ring $\C[X, T_1, \ldots, T_t, T_1^{-1}, \ldots, T_t^{-1}] $  are precisely the terms $qT_1^{a_1}\cdots T_t^{a_t}$ with $q \in \C$ and $a_i \in \Z$. Assume that $d_{m, n}u, v$ are co-prime and $v$ is not a unit multiple of an element of $\C[X]$. Suppose that both $d_{m, n}u,v$ lie in the ring $\C[X, T_1, \ldots, T_t]$ by multiplying by a suitable unit such that both are co-prime and $v$ has more than one term as a polynomial in $T_1, \ldots, T_t$. Hence, there exists a variable, say $T_1$, appearing in the terms of $v$ with at least two different degrees. Also, suppose that $d_{m, n}u,v$ lies in the ring $\mathcal{O}[X, T_1, \ldots, T_t]$.
 \par
 Now consider the resultant $\omega(X, T_2, \ldots, T_t)$ of $d_{m, n}u, v$ with respect to $T_1$. As $d_{m, n}u$ and $v$ are co-prime, it is clear that the resultant is non-zero and has coefficients in $\mathcal{O}$. To apply Lemma \ref{lem2}, we let $\delta$ to be the product of the non-zero coefficients of $\omega$ and of $d_{m, n}u,v$. Let $\phi$ be as in Lemma \ref{lem2}. It is easy to verify that $\phi(\gamma_i)$ are multiplicatively independent. If not, then we get a contradiction to the fact that $\gamma_i$ are multiplicatively independent for $i=1,\ldots, t$.
\par
Consider the specialisations $\phi(d_{m, n}u)$ and $\phi (v)$. These are polynomials in the ring $\bar{\Q}[X, T_1, \ldots, T_t]$ and are co-prime with respect to $T_1$. Here we used the fact that the resultant of $d_{m, n}u, v$ is equal to the resultant of $\phi(d_{m, n}u), \phi(v)$, and are non-zero. By the initial choice of $T_1$, and the non-vanishing of $\phi$ on the coefficients, we get $\phi(v)$ also contains at least two terms. We may write $\phi(d_{m, n}u)=du_1 \text{ and  } \phi(v)=dv_1,$
where $d, u_1, v_1 \in \bar{\Q}[X, T_1, \ldots, T_t]$ and $u_1, v_1$ are co-prime. It is obvious that $d$ is independent of $T_1$ and $v_1$ contains at least two terms with respect to $T_1$. 
\par 
Again using Lemma \ref{lem2.2}, we associate $u_1, v_1$ to co-prime linear recurrences $\tilde{U}, \tilde{V}$ with algebraic roots and coefficients. Here, we associate the variable $X$ to the function $n \rightarrow n$ and the variables $T_i$ to the function $n\rightarrow \phi(\gamma_i)^n$. As $\phi$ is injective on $\Gamma$ and $v_1$ contains at least two terms, $\tilde{V}$ has at least two roots. Also, the roots of these recurrences generate a torsion-free subgroup. These recurrences are non-degenerate, so by  Theorem \ref{thmskm}, $\tilde{V}=0$ only for finitely many $n$. We may disregard this finite set. 
\par 
Then 
\[\frac{d_{m, n} \tilde{U}(m)}{\tilde{V}(n)}=\phi\Bigg(\frac{d_{m, n}U(m)}{V(n)}\Bigg) \in \phi(\mathcal{R)},\]
for $(m, n)\in \mathcal{M}$. Since $\phi(\mathcal{R})$ is a finitely generated subring of a number field, then by assumption $\mathcal{M}$ is infinite and $\tilde{V}$ has at least two roots. This is a contradiction to Proposition \ref{prop}. To obtain part (i), we do the same steps by putting $m=n$ everywhere. This completes the proof of corollary \ref{cor1}.
\qed

\subsection{Proof of Corollary \ref{cor2}} 
We prove part (ii), and the proof of part (i) follows in the same way by putting $m=n$. Assume that for all but finitely many $(m, n)\in \mathbb{N}^2$ there exist non-zero positive integers $d_{m, n}$ such that $\log |d_{m, n}|=o(n)$,  and \[d_{m,n}U(m)/V(n) \in \mathcal R.\]
Now we partition $\N$ into a finite number of suitable arithmetic progressions and consider the restrictions of the functions involved to each progression separately. Observe that here, the multiplicative group generated by the roots of $U$ and $V$ need not be torsion-free. Let $q$ be the order of the torsion in the multiplicative group $\Gamma$ generated by the roots of $U, V$ together. For $r_1, r_2=0,1, \ldots, q-1$, let $m=qm_1+r_1$ and $n=qn_1+r_2$. Then the recurrences $U(qm_1+r_1)$ and $V(qn_1+r_2)$ have roots among the $q$th powers of the roots of $U, V$ and they are in the torsion-free group $\Gamma^q$. Now we apply Theorem \ref{thm1} in each of these partitions. So we get a non-zero polynomial $P(X) \in \mathbb{C}[X]$ and positive integers $q, r_1, r_2$ such that $d_{qm_1+r_1, qn_1+r_2}P(qn_1+r_2)U(qm_1+r_1)/V(qn_1+r_2)$ is a multi-recurrence and $V(qn_1+r_2)/P(qn_1+r_2)$ is a linear recurrence. Now in the proof of part (i), when $m=n$, $d_{qn_1+r_2}P(qn_1+r_2)U(qn_1+r_2)/V(qn_1+r_2)$ becomes a linear recurrence as in the proof of Theorem \ref{thm1}. Hence, the proof of Corollary \ref{cor2}.

\section*{Acknowledgement} 
First author's research is supported by a UGC fellowship (Ref No. 221610077314).


\begin{thebibliography}{9999}
		
		

\bibitem{zeros}
F. Amoroso and E. Viada, \emph{On the zeros of linear recurrence sequences}, Acta Arith. {\bf 147}(4) (2011), 387-396.

\bibitem{divisibility}
R. André-Jeannin, \emph{Divisibility of generalized Fibonacci and Lucas numbers by their subscripts}, Fibonacci Quart. {\bf 29}(4) (1991), 364-366.

\bibitem{corv1998} 
P. Corvaja and U.  Zannier, {\em Diophantine equations with power sums and universal Hilbert sets}, Indag. Math. (N.S.) {\bf 9} (3) (1998), 317-332. 

\bibitem{corv}
P. Corvaja and U.  Zannier, \emph{Finiteness of integral values for the ratio of two linear recurrences}, Invent. Math. {\bf149} (2002), 431-451.

\bibitem{evertse}
G. Everest, A. J. van Der Poorten, I. E. Shparlinski and T.  Ward, \emph{Recurrence sequences}, Math. Surveys Monogr., Vol. 104, AMS, Providence, RI (2003).
\bibitem{numbers n}
J. J. A. González, F. Luca, C.  Pomerance and  I. E. Shparlinski, \emph{On numbers n dividing the nth term of a linear recurrence}, Proc. Edinb. Math. Soc. {\bf 55}(2) (2012), 271-289.

\bibitem{gcd}
N. Grieve and J. Wang, \emph{Greatest common divisors with moving targets and consequences for linear recurrence sequences}, Trans. Amer. Math. Soc. {\bf 373}(11) (2020), 8095-8126.


\bibitem{lech}
C. Lech, \emph{A note on recurring series}, Ark. Mat. {\bf 2}(5) (1953), 417-421.

\bibitem{levin}
A. Levin, \emph{Greatest common divisors and Vojta’s conjecture for blowups of algebraic tori}, Invent. Math. {\bf 215}(2) (2019), 493-533.

\bibitem{Mahler1}
K. Mahler, \emph{Eine arithmetische Eigenschaft der Taylor-Koeffizienten  rationaler Funktionen}, Proc. K. Ned. Akad. Wet. {\bf 38} (1935), 50-60.

\bibitem{Mahler2}
K. Mahler, \emph{On the Taylor coefficients of rational functions}, Proc. Camb. Philos. Soc. {\bf 52} (1956), 39-48.

\bibitem{Mahler3}
K. Mahler, \emph{Addendum to the paper ``On the Taylor coefficients of rational functions"}, Proc. Camb. Philos. Soc. {\bf 53} (1957), 544.

\bibitem{Pourchet}
Y. Pourchet, \emph{Solution du probleme arithm\'etique du quotient de Hadamard de deux fractions rationnelles}, C.R. Acad. Sci. Paris {\bf 288} (1979), 1055-1057.

\bibitem{kthroot}
R Rumely and A. J. van der Poorten, \emph{A note on the Hadamard kth root of a rational function}, J. Aust. Math. Soc. {\bf 43}(3) (1987), 314-327.

\bibitem{Rumely}
R Rumely, \emph{Notes on van der Poorten's proof of the Hadamard Quotient Theorem, S\'em. de Th\'eorie des Nombres, Paris 1986–87}, Progr. Math. {\bf 75} (1988), 349-409.

\bibitem{sanna}
C. Sanna, \emph{Distribution of integral values for the ratio of two linear recurrences}, J. Number Theory {\bf180} (2017), 195-207. 

\bibitem{Subspace}
W. M. Schmidt, \emph{Diophantine approximations and Diophantine equations}, Springer (2006).

\bibitem{shorey}
T. N. Shorey and R. Tijdeman, \emph{Exponential Diophantine Equations}, Vol. 87, Cambridge University Press (1986).

\bibitem{skolem}
T. Skolem, \emph{Ein Verfahren zur Behandlung gewisser exponentialer Gleichungen und diophantischer Gleichungen}, Comptes Rendus Congr. Math. Scand., Stockholm (1934), 163-188.

\bibitem{vdP2}
A. J. van der Poorten, \emph{Solution de la conjecture de Pisot sur le quotient de Hadamard de deux fractions rationnelles}, C. R Acad. Sci. Paris {\bf 306}(97) (1988).

\bibitem{VdP} 
A. J. van der Poorten, \emph{Some facts that should be better known, especially about rational functions in Number Theory and Applications (Banff, AB, 1988)}, {\bf265} (1989), 497-528.

\bibitem{Zhen}
Z. Xiao, \emph{Greatest common divisors for polynomials in almost units and applications to linear recurrence sequences}, Math. Z. {\bf 306}(4) (2024), article number 61.

\bibitem{zannier}
U. Zannier, {\em Diophantine equations with linear recurrences An overview of some recent progress}, J. Th\'eor. Nombres Bordx. {\bf 17}(1) (2005), 423-435.


\end{thebibliography}
\end{document}